\DeclareSymbolFontAlphabet{\mathbb}{AMSb}
\DeclareSymbolFontAlphabet{\mathbbl}{bbold}
\DeclareSymbolFontAlphabet{\mathbb}{AMSb}
\DeclareSymbolFontAlphabet{\mathbbl}{bbold}
\DeclareSymbolFont{fouriersymbols}{FMS}{futm}{m}{n}
\DeclareSymbolFont{fourierlargesymbols}{FMX}{futm}{m}{n}
\DeclareMathDelimiter{\VERT}{\mathord}{fouriersymbols}{152}{fourierlargesymbols}{147}
\newtheorem{ThA}{Proposition}
\newtheorem{thm}{Theorem}[section]
 \newtheorem{prop}[thm]{Proposition}
\theoremstyle{definition}
 \theoremstyle{remark}
 \newtheorem{rem}[thm]{Remark}
\newcommand{\supp}{\mathop{\mathrm{supp}}}
\newcommand{\essinf}{\mathop{\mathrm{ess\, inf \;}}}
\numberwithin{equation}{section}
\begin{document}

%\footnotetext{Last modification: \today.}

\title[]
 {Variation operators associated with the semigroups generated by Schr\"odinger operators with inverse square potentials}

\author[V. Almeida]{V\'{\i}ctor Almeida}

\author[J.J. Betancor]{Jorge J. Betancor}
\address{V\'{\i}ctor Almeida, Jorge J. Betancor, Lourdes Rodr\'{\i}guez-Mesa\newline
	Departamento de An\'alisis Matem\'atico, Universidad de La Laguna,\newline
	Campus de Anchieta, Avda. Astrof\'isico S\'anchez, s/n,\newline
	38721 La Laguna (Sta. Cruz de Tenerife), Spain}
\email{valmeida@ull.edu.es, jbetanco@ull.es, lrguez@ull.edu.es
}

\author[L. Rodr\'{\i}guez-Mesa]{Lourdes Rodr\'{\i}guez-Mesa}

\thanks{The authors are partially supported by grant PID2019-106093GB-I00 from the Spanish Government}

\subjclass[2020]{42B25, 42B30}

\keywords{}

\date{}

%%% ----------------------------------------------------------------------

\begin{abstract}
By $\{T_t^a\}_{t>0}$ we denote the semigroup of operators generated by the Friedrichs extension of the Schr\"odinger operator with the inverse square potential $L_a=-\Delta +\frac{a}{|x|^2}$ defined in $C_c^\infty (\mathbb{R}^n\setminus\{0\})$. In this paper we establish weighted $L^p$-inequalities for the maximal, variation, oscillation and jump operators associated with $\{t^\alpha\partial_t^\alpha T_t^a\}_{t>0}$, where $\alpha \geq 0$ and $\partial _t^\alpha$ denotes the Weyl fractional derivative. The range of values $p$ that works is different when $a\geq 0$ and when $-\frac{(n-2)^2}{4}< a<0$.
\end{abstract}

%%% ----------------------------------------------------------------------
\maketitle
%%% ----------------------------------------------------------------------

\baselineskip=15pt
\section{Introduction}
We consider the Schr\"odinger operator $L_a$ with inverse square potential defined by
$$
L_a=-\Delta +\frac{a}{|x|^2},
$$
on $C_c^\infty (\mathbb{R}^n\setminus\{0\})$, the space of smooth functions with compact support in $\mathbb{R}^n\setminus\{0\}$. Here $\Delta$ denotes as usual the Euclidean Laplacian operator on $\mathbb{R}^n$ and $a> -\frac{(n-2)^2}{4}$.
In \cite[Theorem 2]{Si} it was proved that the realization $\mathcal{L}_{a,2}$ of $L_a$ in $L^2(\mathbb{R}^n)$ is essentially selfadjoint on $C_c^\infty (\mathbb{R}^n\setminus\{0\})$ if and only if $a\geq  1-\frac{(n-2)^2}{4}$. Perturbation techniques can be used to see that $\mathcal{L}_{a,2}$ is selfadjoint when $a>1-\frac{(n-2)^2}{4}$ (\cite[Theorem 6.8]{OK1}). For every $1<p<\infty$ we denote by $\mathcal{L}_{a,p}$ the realization of $L_a$ in $L^p(\mathbb{R}^n)$ with domain the Sobolev space $W^{2,p}(\mathbb{R}^n)$. In \cite[Theorem 3.11]{OK2} it was proved that $\mathcal{L}_{a,p}$ generates a contractive and positive $C_0$-semigroup in $L^p(\mathbb{R}^n)$ being $C_c^\infty(\mathbb{R}^n)$ a core for $\mathcal{L}_{a,p}$ provided that $n>2p$ and $a>-\frac{(p-1)(n-2p)n}{p^2}$. Other generation results for suitable realizations of $L_a$ in $L^p(\mathbb{R}^n)$ can be found in \cite{FGR}.

In \cite{LSV} and \cite{SV} (see also \cite{V}) positive $C_0$-semigroups associated with second order uniformly elliptic divergence type operators with singular lower order terms, including the operator $L_a$, subject to a wide class of boundary conditions are studied. They described a method of constructing positive $C_0$-semigroups on $L^p$ associated to sesquilinear forms that are not necessarily sectorial. The boundary condition appears in the domain of the form. Assume that $-\frac{(n-2)^2}{4}<a<0$. We denote by $p_+$ and $p_-$ the two roots of the equation 
$$
x^2-x-\frac{a}{(n-2)^2}=0,
$$
being $p_-<p_+$. Then, for every $p\in [(\frac{n}{n-2}p_-')',\frac{n}{n-2}p_+]$, the realization $\mathbb{L}_a$ of $L_a$ generates a quasicontractive $C_0$-semigroup associated with the form $\tau$ defined by
$$
\tau(u,v)=\langle \nabla u,\nabla v\rangle+a\langle |x|^{-2}u,v\rangle,\quad u,v\in W^{1,2}(\mathbb{R}^n)\cap \big\{f\in L^2(\mathbb{R}^n):|x|^{-1}f\in L^2(\mathbb{R}^n)\big\}. 
$$
Furthermore, the $p$-interval is optimal (\cite[p. 88]{V}). This situation remembers the named pencil phenomenon described in \cite{MST} (see also \cite{NSj}).

As in \cite{KMVZZ} we denote by $\mathcal{L}_a$ the Friedrichs extension of $L_a$. Thus, $\mathcal{L}_a$ is the unique selfadjoint extension of $L_a$ whose form domain $Q(\mathcal{L}_a)=D(\sqrt{\mathcal{L}_a})\subset L^2(\mathbb{R}^n)$ coincides with the completion of $C_c^\infty(\mathbb{R}^n\setminus\{0\})$ with respect to the norm
$$
\|f\|_{Q(\mathcal{L}_a)}=\left(\int_{\mathbb{R}^n}\Big(|\nabla f(x)|^2+\Big(1+\frac{a}{|x|^2}\Big)|f(x)|^2\Big)dx\right)^{1/2},\quad f\in C_c^\infty (\mathbb{R}^n\setminus\{0\}).
$$
Furthermore, $\mathcal{L}_a$ is positive. If $a\geq 1-\frac{(n-2)^2}{4}$, $L_a$ admits a one parameter family of selfadjoint extensions and $\mathcal{L}_a$ is characterized by the asymptotics in the origen of spherically symmetric eigenfunctions. For further details, see \cite[\S X.3]{RS}. 

We denote by $\{T_t^a\}_{t>0}$ the semigroup of operators generated by $-\mathcal{L}_a$ in $L^2(\mathbb{R}^n)$ (\cite{Pil}). For every $t>0$, $T_t^a$ admits an integral representation given by
$$
T_t^a(f)(x)=\int_{\mathbb{R}^n}T_t^a(x,y)f(y)dy,\quad f\in L^2(\mathbb{R}^n).
$$
According to the results in \cite{LS} and \cite{MS} (see \cite[Theorem 2.1]{KMVZZ}) there exist $C,c_1,c_2>0$ such that, for every $t>0$ and $x,y\in \mathbb{R}^n\setminus\{0\}$,
\begin{equation}\label{1.1}
\frac{1}{C}\Big(1+\frac{\sqrt{t}}{|x|}\Big)^\sigma\Big(1+\frac{\sqrt{t}}{|y|}\Big)^\sigma\frac{e^{-c_2\frac{|x-y|^2}{t}}}{t^{n/2}}\leq T_t^a(x,y)\leq C\Big(1+\frac{\sqrt{t}}{|x|}\Big)^\sigma\Big(1+\frac{\sqrt{t}}{|y|}\Big)^\sigma\frac{e^{-c_2\frac{|x-y|^2}{t}}}{t^{n/2}},
\end{equation}
where $\sigma=\frac{n-2}{2}-\frac{1}{2}\sqrt{(n-2)^2+4a}$. Note that if $\sigma\leq 0$, that is, if $a\geq 0$, then \eqref{1.1} implies Gaussian upper bounds for $\{T_t^ a\}_{t>0}$. In this case, for every $t>0$, $T_t^a$ is bounded from $L^p(\mathbb{R}^n)$ into itself, for each $1\leq p\leq \infty$. However, if $-\frac{(n-2)^2}{4}< a<0$, then $0<\sigma <\frac{n-2}{2}$ and neither Gaussian upper bounds nor Poisson upper bounds are satisfied for $\{T_t^a\}_{t>0}$. By \eqref{1.1} it can be deduced that $T_t^a$, $t>0$, is not bounded from $L^p(\mathbb{R}^n)$ into itself for $p\geq n/\sigma$, when $0<\sigma <\frac{n-2}{2}$. Indeed, assume that $0<\sigma<\frac{n-2}{2}$. By \eqref{1.1} we deduce that 
\begin{align*}
    \int_\mathbb{\mathbb{R}}\mathcal{X}_{B(0,1)}(y)T_t^a(x,y)dy&\geq C\Big(1+\frac{\sqrt{t}}{|x|}\Big)^\sigma \int_{B(0,1)}\Big(1+\frac{\sqrt{t}}{|y|}\Big)^\sigma \frac{e^{-c\frac{|x-y|^2}{t}}}{t^{n/2}}dy\\
    &\geq C\Big(\frac{\sqrt{t}}{|x|}\Big)^\sigma\int_{B(0,1)}\Big(\frac{\sqrt{t}}{|y|}\Big)^\sigma \frac{e^{-\frac{c}{t}}}{t^{n/2}}dy\\
    &\geq \frac{C(t)}{|x|^\sigma},\quad x\in B(0,1)\setminus\{0\}\mbox{ and }t>0.
\end{align*}
Then, for every $t>0$, $T_t^a(\mathcal{X}_{B(0,1)})\not \in L^p(\mathbb{R}^n)$ when $p\geq n/\sigma$.

The semigroup of operators $\{T_t^a\}_{t>0}$ can be extended as an analytic semigroup $\{T_z\}_{z\in \Sigma _{\pi/4}}$, where $\Sigma_{\pi/4}=\{z\in \mathbb{C}, |\rm{arg }\;z|<\frac{\pi}{4}\}$. According to \cite[Proposition 3.4]{BADLL} we have that there exist $C,c>0$ for which
\begin{equation}\label{1.2}
    |T_z^a(x,y)|\leq C\left(1+\frac{\sqrt{|z|}}{|x|}\right)^\sigma\left(1+\frac{\sqrt{|z|}}{|y|}\right)^\sigma |z|^{-n/2}e^{-c\frac{|x-y|^2}{|z|}},\quad z\in \Sigma_{\frac{\pi}{4}},\;x,y\in \mathbb{R}^n\setminus\{0\}.
\end{equation}
By using for instance Cauchy integral formula, from \eqref{1.2} we can deduce that, for certain $C,c>0$,
\begin{equation}\label{1.3}
|t^k\partial _t^kT_t^a(x,y)|\leq C\left(1+\frac{\sqrt{t}}{|x|}\right)^\sigma\left(1+\frac{\sqrt{t}}{|y|}\right)^\sigma t^{-n/2}e^{-c\frac{|x-y|^2}{t}},\quad x,y\in \mathbb{R}^n\setminus\{0\},\;t>0.
\end{equation}
We remark that as far as we know we have not H\"older continuity properties with respect to the spatial variables for $T_t(x,y)$. Then, the Calder\'on-Zygmund theory for singular integrals can not be used to prove $L^p$-boundedness properties of harmonic analysis operator (maximal operators and Littlewood-Paley-Stein functions associated with $\{T_t^a\}_{t>0}$, Riesz transforms, spectral multipliers,...) in the $\mathcal{L}_a$-setting. Fractional powers of $\mathcal{L}_a$, some square functions, Riesz transforms and smoothing inequalities in $L^p(\mathbb{R}^n)$ were studied in \cite{BADLL}. Mihlin multipliers associated with $\mathcal{L}_a$ were considered in \cite{KMVZZ} where  $\mathcal{L}_a$-Sobolev spaces were also investigated. In \cite{MZZ} maximal $L^p$ estimates for the solution to initial value problems of the linear Schr\"odinger with inverse square potential were studied. Recently, Bui (\cite{Bui1}) has introduced Besov and Triebel-Lizorkin spaces for the operator $\mathcal{L}_a$. 

In \cite{BBD} H.-Q. Bui, T.-A. Bui and Duong studied Besov and Triebel Lizorkin associated with operators on homogeneous spaces. For every $a\geq 0$, the operator $\mathcal{L}_a$ can be seen as a special case of the operators considered in \cite{BBD}. In this paper we are going to consider the Hardy space $H^1(\mathcal{L}_a)$, for every $a\geq 0$. Let $a\geq 0$. As in \cite[\S 5.2]{BBD} we define $H^1(\mathcal{L}_a)$ as the completion of the set $\{f\in L^2(\mathbb{R}^n): S_a(f)\in L^1(\mathbb{R}^n)\}$ with respect to the norm
$$
\|f\|_{H^1(\mathcal{L}_a)}=\|S_a(f)\|_{L^1(\mathbb{R}^n)},
$$
where
$$
S_a(f)(x)=\left(\int_0^\infty \int_{|x-y|<t}|t^2\mathcal{L}_ae^{-t\mathcal{L}_a^2}(f)(y)|^2\frac{dydt}{t^{n+1}}\right)^{1/2},\quad x\in \mathbb{R}^n.
$$
The space $H^1(\mathcal{L}_a)$ is also a special case of the one considered in \cite{HLMMY}. $H^1(\mathcal{L}_a)$ can be characterized by using atoms. Let $M\in \mathbb{N}$. A function $b\in L^2(\mathbb{R}^n)$ is said to be a $(2,M)$-atom associated to the operator $\mathcal{L}_a$ when there exists a function $u$ belonging to the domain $D(\mathcal{L}_a^M)$ of $\mathcal{L}_a^M$ and a ball $B$ such that

(i) $b=\mathcal{L}_a^Mu$;

(ii) $\supp \mathcal{L}_a^ku \subset B$, $k=0,1,...,M$;

(iii) $\|(r_B^2\mathcal{L}_a)^ku\|_{L^2(\mathbb{R}^n)}\leq r_B^{2M}|B|^{-1/2}$, $k=0,1,...,M$, where $r_B$ denotes the radius of $B$.

The atomic Hardy spaces $H_{at, M}^1(\mathcal{L}_a)$ is defined in the usual way. A function $f\in L^2(\mathbb{R}^n)$ is in $\mathbb{H}_{at,M}^1(\mathbb{R}^n)$ when $f=\sum_{j=0}^\infty \lambda_ja_j$ in $L^2(\mathbb{R}^n)$ where, for every $j\in \mathbb{N}$, $a_j$ is a $(2,M)$-atom for $\mathcal{L}_a$ and $\lambda _j>0$ being $\sum_{j=0}^\infty \lambda _j<\infty$. The norm $\|\cdot\|_{H_{at,M}^1(\mathcal{L}_a)}$ is defined on $\mathbb{H}_{at,M}^1(\mathbb{R}^n)$ by
$$
\|f\|_{H_{at,M}^1(\mathcal{L}_a)}=\inf \sum_{j=0}^\infty |\lambda _j|,\quad f\in H_{at,M}^1(\mathcal{L}_a),
$$
where the infimum is taken over all the sequences $\{\lambda_j\}_{j\in \mathbb{N}}\subset (0,\infty )$ such that $\sum_{j=1}^\infty \lambda_j<\infty$ and $f=\sum_{j=0}^\infty \lambda_ja_j$ in $L^2(\mathbb{R}^n)$ where $a_j$ is a $(2,M)$-atom associated with $\mathcal{L}_a$, for every $j\in \mathbb{N}$. The Hardy space $H_{at,M}^1(\mathcal{L}_a)$ is defined as the completion of $\mathbb{H}_{at,M}^1(\mathcal{L}_a)$ with respect to $\|\cdot\|_{H_{at,M}^1(\mathcal{L}_a)}$.

According to \cite[Theorem 2.5]{HLMMY}, for every $M>1$, $H^1(\mathcal{L}_a)=H_{at,M}^1(\mathcal{L}_a)$ algebraic and topologically. The same result holds when the condition (iii) is replaced by

(iii') $\|(r_B^2\mathcal{L}_a)^k u\|_{L^q(\mathbb{R}^n)}\leq r_B^{2M}|B|^{1/q-1}$, $k=0,1,...,M$, \newline
where $1<q\leq \infty$ (\cite[Theorem 1.4]{SY1}).

We now consider the maximal operator $T_*^a$ defined by
$$
T_*^a(f)=\sup _{t>0}|T_t^a(f)|,\quad f\in L^p(\mathbb{R}^n),\;1\leq p\leq \infty.
$$
According to \eqref{1.1}, when $a\geq 0$ $T_*^a$ is bounded from $L^p(\mathbb{R}^n)$ into itself for every $1<p\leq \infty$ and from $L^1(\mathbb{R}^n)$ into $L^{1,\infty }(\mathbb{R}^n)$. We define $\|\cdot \|_{H^1_h(\mathcal{L}_a)}$ by
$$
\|f\|_{H^1_h(\mathcal{L}_a)}=\|T_*^a(f)\|_{L^1(\mathbb{R}^n)},\quad f\in L^2(\mathbb{R}^n).
$$

The radial heat maximal $H_h^1(\mathcal{L}_a)$ is defined as the completion of the set $\{f\in L^2(\mathbb{R}^n):\|f\|_{H^1_h(\mathcal{L}_a)}<\infty\}$ with respect to $\|\cdot \|_{H^1_h(\mathcal{L}_a)}$. By \cite[Theorem 1.3]{SY2} we have that $H^1_h(\mathcal{L}_a)=H^1(\mathcal{L}_a)$, algebraic and topologically. 

Suppose now that $\{S_t\}_{t>0}$ is a family of bounded operators in $L^p(\mathbb{R}^n)$ for some $1\leq p<\infty$. If $\{t_j\}_{j\in \mathbb{N}}\subset (0,\infty )$ is a decreasing sequence, the oscillation operator $O(\{S_t\}_{t>0},\{t_j\}_{j\in \mathbb{N}})$ is defined by
$$
O(\{S_t\}_{t>0},\{t_j\}_{j\in \mathbb{N}})(f)(x)=\left(\sum_{j\in \mathbb{N}}\sup_{t_{j+1}\leq \varepsilon _{j+1}<\varepsilon _j\leq t_j}\big|S_{\varepsilon _j}(f)(x)-S_{\varepsilon _{j+1}}(f)(x)\big|^2\right)^{1/2}, \quad x\in \mathbb{R}^n.
$$
Let $\rho >2$. The variation operator $V_\rho(\{S_t\}_{t>0})$ is defined by 
$$
V_\rho(\{S_t\}_{t>0})(f)(x)=\sup_{\begin{array}{c}  0<t_k<...<t_1<\infty \\
k\in\mathbb N \end{array}}\left(\sum_{j=1}^{k-1}\big|S_{t_j}(f)(x)-S_{t_{j+1}}(f)(x)\big|^\rho\right)^{1/\rho}, \quad x\in \mathbb{R}^n.
$$
A remarkable question is concerned with the measurability of the functions we just defined as it is commented in \cite[p. 60]{CJRW1}.

For every $\lambda >0$, the $\lambda$-jump operator $\Lambda(\{S_t\}_{t>0},\lambda)$ is given through
\begin{align*}
\Lambda(\{S_t\}_{t>0},\lambda )(f)(x)=\sup\Big\{&n\geq 0: \mbox{ there exist }s_1<t_1\leq s_2<t_2\leq ...\leq s_n<t_n\mbox{ such that }\\
&|S_{t_i}(f)(x)-S_{s_i}(f)(x)|>\lambda ,\;i=1,2,...,n\Big\},\quad x\in \mathbb{R}^n.
\end{align*}
The size of $\Lambda(\{S_t\}_{t>0},\lambda )(f)(x)$ give us information on the way of the family $\{S_t\}_{t>0}$ converges.

For every $k\in \mathbb{Z}$ we consider
$$
V_k(\{S_t\}_{t>0})(f)(x)=\sup_{2^{-k}<t_\ell<...<t_1<2^{-k+1}}\left(\sum_{j=1}^{\ell-1}\big|S_{t_j}(f)(x)-S_{t_{j+1}}(f)(x)\big|^2\right)^{1/2 }, \quad x\in \mathbb{R}^n.
$$
The short variation $S_V(\{S_t\}_{t>0})$ is defined by
$$
S_V(\{S_t\}_{t>0})(f)(x)=\left(\sum_{k=-\infty}^{+\infty}\big(V_k(\{S_t\}_{t>0})(f)(x)\big)^2\right)^{1/2},\quad x\in \mathbb{R}^n.
$$
The oscillation and short variation can be defined by replacing the exponent by other exponent $\rho\not=2$. If $\rho >2$ the new operators are controlled by those ones with $\rho =2$ and the estimations for the operators with $\rho=2$ are transferred to the operators with $\rho >2$. When $\rho <2$ it is usual that the new operators are not bounded in $L^p(\mathbb{R}^n)$ (see \cite{AJS} for the case of differentiation operators). We also need, in general, to consider $\rho >2$ in the definition of the variation operator in order to get $L^p$-boundedness properties (see, for instance, \cite{Qi} for the case of martingales).

Variational inequalities appears in many recent papers in probability, ergodic theory and harmonic analysis. Bourgain \cite{Bo} was the first to consider variation inequalities in ergodic theory. Bourgain's inequalities had as predecessor to Lepingl\'e's inequality \cite{Lep} for martingales and they were extended to $L^p$ with $p\not=2$ in \cite{JKRW}. Variation and oscillation operators for families of operators associated with singular integrals and semigroups of operators have been studied. Each of them informs on some convergence for the family of operators under consideration. The interested reader can see in particular \cite{CJRW1}, \cite{CJRW2}, \cite{CMMTV}, \cite{JR}, \cite{JSW}, \cite{JW}, \cite{LeMX}, \cite{MTX}, \cite{OSTTW} and references therein.

Let $\alpha \geq 0$. We choose $m\in \mathbb{N}$ such that $m-1\leq \alpha <m$. If $\phi \in C^m(0,\infty)$ the $\alpha$-th Weyl derivative $\partial_t ^\alpha \phi$ of $\phi$ is defined by
$$
\partial_t^\alpha \phi (t)=\frac{e^{-i\pi(m-\alpha)}}{\Gamma (m-\alpha)}\int_0^\infty \partial _u^m\phi (u)_{|u=t+s}s^{m-\alpha -1}ds,\quad t\in (0,\infty ).
$$
We recall the definitions of the classes of weights that we consider. Suppose that $1<q<\infty$. A nonnegative locally integrable function $w$ defined on $\mathbb{R}^n$ is said to be in the Muckenhoupt class $A_q(\mathbb{R}^n)$ when there exists $C>0$ such that, for every ball $B$ in $\mathbb{R}^n$, 
$$
\frac{1}{|B|}\int_Bw(x)dx\left(\frac{1}{|B|}\int_B(w(x))^{-\frac{1}{q-1}}dx\right)^{q-1}\leq C.
$$
A nonnegative locally integrable function $w$ defined in $\mathbb{R}^n$ is in the Muckenhoupt class $A_1(\mathbb{R}^n)$ when there exists $C>0$ such that, for every ball $B$ in $\mathbb{R}^n$,
$$
\frac{1}{|B|}\int_Bw(x)dx\leq C\essinf_{x\in B}w(x),
$$
and it is in the Muckenhoupt class $A_\infty (\mathbb{R}^n)$ when $w\in A_q(\mathbb{R}^n)$ for some $1\leq q<\infty$. The main properties of the weights in the Muckenhoupt classes can be found in \cite{Duo}.

A nonnegative locally integrable function $w$ defined in $\mathbb{R}^n$ is said to be in the reverse H\"older class $RH_q(\mathbb{R}^n)$, where $1<q<\infty$, if there exists $C>0$ such that, for every ball $B$ in $\mathbb{R}^n$,
$$
\left(\frac{1}{|B|}\int_Bw(x)^qdx\right)^{1/q}\leq \frac{C}{|B|}\int_Bw(x)dx.
$$
We say that $w$ is in the reverse H\"older class $RH_\infty(\mathbb{R}^n)$ when there exists $C>0$ such that, for every ball $B$ in $\mathbb{R}^n$,
$$
w(x)\leq \frac{C}{|B|}\int_Bw(y)dy,\quad \mbox{ for almost all }x\in B.
$$
The main properties of the reverse H\"older classes of weights can be found in \cite{JN}.

We now state our results in this paper. They establish the behaviour of the operators \newline $V_\rho (\{t^\alpha \partial _t^\alpha T_t^a\}_{t>0})$, $O(\{t^\alpha \partial _t^\alpha T_t^a\}_{t>0},\{t_j\}_{j\in \mathbb{N}})$, $\lambda \Lambda (\{t^\alpha \partial _t^\alpha T_t^a\}_{t>0},\lambda )^{1/\rho}$, $\lambda >0$ and $S_V(\{t^\alpha \partial _t^\alpha T_t^a\}_{t>0})$ in $L^p(\mathbb{R}^n,w)$, $1\leq p<\infty$, and in $H^1(\mathcal{L}_a)$ when $\alpha \geq 0$ and $a>-\frac{(n-2)^2}{4}$.

\begin{thm}\label{Th1.1}
Let $\alpha \geq 0$ and $\rho >2$. Suppose that $\{t_j\}_{j\in \mathbb{N}}\subset (0,\infty)$ is a decreasing sequence that converges to zero.

(i) Assume that $a\geq 0$. Then, the operators $V_\rho (\{t^\alpha \partial _t^\alpha T_t^a\}_{t>0})$, $O(\{t^\alpha \partial _t^\alpha T_t^a\}_{t>0},\{t_j\}_{j\in \mathbb{N}})$ and $S_V(\{t^\alpha \partial _t^\alpha T_t^a\}_{t>0})$ are bounded from $L^p(\mathbb{R}^n,w)$ into itself, for every $1<p<\infty$ and $w\in A_p(\mathbb{R}^n)\cap RH_1(\mathbb{R}^n)$, and from $L^1(\mathbb{R}^n)$ into $L^{1,\infty}(\mathbb{R}^n)$. The family $\{\lambda \Lambda (\{t^\alpha \partial _t^\alpha T_t^a\}_{t>0},\lambda )^{1/\rho}\}_{\lambda >0}$ is uniformly bounded from $L^p(\mathbb{R}^n,w)$ into itself, for every $1<p<\infty$ and $w\in A_p(\mathbb{R}^n)\cap RH_1(\mathbb{R}^n)$, and from $L^1(\mathbb{R}^n)$ into $L^{1,\infty}(\mathbb{R}^n)$.

(ii) Assume that $-\frac{(n-2)^2}{4}< a<0$. Then, the operators $V_\rho (\{t^\alpha \partial _t^\alpha T_t^a\}_{t>0})$, \newline $O(\{t^\alpha \partial _t^\alpha T_t^a\}_{t>0},\{t_j\}_{j\in \mathbb{N}})$ and $S_V(\{t^\alpha \partial _t^\alpha T_t^a\}_{t>0})$ are bounded from $L^p(\mathbb{R}^n,w)$ into itself, and the family $\{\lambda \Lambda (\{t^\alpha \partial _t^\alpha T_t^a\}_{t>0},\lambda )^{1/\rho}\}_{\lambda >0}$ is uniformly bounded from $L^p(\mathbb{R}^n,w)$ into itself, provided that $p\in (\frac{n}{n-\sigma},\frac{n}{\sigma})$ and $w\in A_{(n-\sigma)p/n}(\mathbb{R}^n)\cap RH_{(\frac{n}{\sigma p})'}(\mathbb{R}^n)$. 
\end{thm}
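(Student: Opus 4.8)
The plan is to treat the four operators $V_\rho$, $O$, $S_V$ and $\lambda\,\Lambda^{1/\rho}$ at once by viewing the family $\{t^\alpha\partial_t^\alpha T_t^a\}_{t>0}$ as a single vector-valued operator. For each of them there is a Banach space $E$ --- the space $\mathcal{V}_\rho$ of functions of finite $\rho$-variation (mod constants) for $V_\rho$, a mixed $\ell^2$-type norm for $O$ and $S_V$, and the analogous space encoding $\lambda$-jumps --- so that the operator equals the $E$-norm of
$$\mathcal{T}f(x)=\big\{t^\alpha\partial_t^\alpha T_t^a(f)(x)\big\}_{t>0}.$$
Thus it suffices to show that $\mathcal{T}$ is bounded from $L^p(\R^n,w)$ into the $E$-valued space $L^p_E(\R^n,w)$ in the two stated ranges. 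The jump estimates then follow from the variation estimates via the elementary pointwise inequality $\lambda\,\Lambda(\{S_t\},\lambda)^{1/\rho}\lesssim V_\rho(\{S_t\})$, uniform in $\lambda$ (if there are $N$ jumps of size exceeding $\lambda$, the $\rho$-variation is at least $N^{1/\rho}\lambda$).

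First I would establish the unweighted $L^2$ bound. Writing out the Weyl derivative of $e^{-t\lambda}$ one checks, through the spectral theorem for the nonnegative self-adjoint operator $\mathcal{L}_a$, that $t^\alpha\partial_t^\alpha T_t^a=\phi_\alpha(t\mathcal{L}_a)$ with $\phi_\alpha(u)=c_\alpha u^\alpha e^{-u}$. The $L^2$-boundedness of $V_\rho$ (and, by the usual long/short splitting, of $O$ and $S_V$) then reduces to a square-function estimate: for $\rho>2$ the long variation is controlled, by the L\'epingle--Bourgain argument, by $\big(\int_0^\infty|t\partial_t\phi_\alpha(t\mathcal{L}_a)f|^2\,dt/t\big)^{1/2}=\big(\int_0^\infty|\psi(t\mathcal{L}_a)f|^2\,dt/t\big)^{1/2}$ with $\psi(u)=u\phi_\alpha'(u)$, which is $L^2$-bounded by Plancherel since $\psi(0)=0$ and $\int_0^\infty|u\phi_\alpha'(u)|^2\,du/u<\infty$; the short variation is handled by the same functional calculus.

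Next, in place of the H\"older regularity in the space variable that is unavailable here, I would prove $L^{p_0}$--$L^{q_0}$ off-diagonal (Gaffney--Davies type) estimates for the $E$-valued operator, with $(p_0,q_0)=(1,\infty)$ in case (i) and $(p_0,q_0)=(n/(n-\sigma),n/\sigma)$ in case (ii); note that the latter interval is exactly the range of $L^p$-boundedness of $T_t^a$ forced by \eqref{1.1}. At the kernel level the $\mathcal{V}_\rho$-norm in $t$ of $\{t^\alpha\partial_t^\alpha T_t^a(x,y)\}_t$ is dominated by its total variation $\int_0^\infty|t\partial_t\big(t^\alpha\partial_t^\alpha T_t^a(x,y)\big)|\,dt/t$, hence, by the analogue of \eqref{1.3} for the fractional derivatives $t^\alpha\partial_t^\alpha$ and $t^{\alpha+1}\partial_t^{\alpha+1}$, by
$$\int_0^\infty\!\big(1+\tfrac{\sqrt t}{|x|}\big)^\sigma\big(1+\tfrac{\sqrt t}{|y|}\big)^\sigma t^{-n/2}e^{-c|x-y|^2/t}\,\frac{dt}{t}.$$
When $a\geq 0$ one has $\sigma\leq 0$, the growth factors are harmless and this is $\lesssim|x-y|^{-n}$; when $-\tfrac{(n-2)^2}{4}<a<0$ the factor $(\sqrt t/|x|)^\sigma(\sqrt t/|y|)^\sigma$ with $0<\sigma$ prevents a usable pointwise bound, but integrating against $L^{p_0}$ data over regions at distance $d$ produces the decay $e^{-cd^2/r_B^2}$ with a loss governed by the $\sigma$-powers, and that loss is precisely what fixes the exponents $(p_0,q_0)=(n/(n-\sigma),n/\sigma)$.

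Finally, combining the $L^2$-boundedness with these off-diagonal bounds, I would invoke the weighted theory for (vector-valued) singular operators satisfying $L^{p_0}$--$L^{q_0}$ off-diagonal estimates --- an Auscher--Martell type good-$\lambda$/sharp-maximal-function argument. With $(p_0,q_0)=(1,\infty)$ this yields $1<p<\infty$, $w\in A_p=A_p\cap RH_1$, and the weak type $(1,1)$ of part (i); with $(p_0,q_0)=(n/(n-\sigma),n/\sigma)$ it yields exactly $p\in(n/(n-\sigma),n/\sigma)$ and $w\in A_{(n-\sigma)p/n}\cap RH_{(n/(\sigma p))'}$, since $A_{p/p_0}=A_{(n-\sigma)p/n}$ and $RH_{(q_0/p)'}=RH_{(n/(\sigma p))'}$. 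The main obstacle I expect is the interface between the two core steps: one must bound the $\rho$-variation norm in $t$ of the \emph{differences} of $\{t^\alpha\partial_t^\alpha T_t^a\}$ using only the time-regularity extracted from \eqref{1.3}, and then integrate the $\sigma$-dependent growth factors against the correct Lebesgue exponents so that the off-diagonal range is \emph{sharply} $(n/(n-\sigma),n/\sigma)$ --- any slack there would shrink the admissible $p$-interval and misidentify the $A$- and $RH$-classes in part (ii).
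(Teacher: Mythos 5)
There are two genuine gaps, one in each of your core steps. First, the unweighted input. Your claim that ``for $\rho>2$ the long variation is controlled, by the L\'epingle--Bourgain argument, by $\big(\int_0^\infty|t\partial_t\phi_\alpha(t\mathcal L_a)f|^2\,dt/t\big)^{1/2}$'' is not correct: only the \emph{short} variation is dominated by the vertical square function (Cauchy--Schwarz on dyadic blocks of $dt/t$); the long variation over a lacunary sequence is emphatically not controlled by the $\ell^2$ sum of consecutive differences, and the L\'epingle--Bourgain mechanism handles it by transferring to martingales via Rota's theorem, which requires the semigroup to be Markovian (positive, conservative, contractive on $L^1$ and $L^\infty$). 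The paper points out explicitly that $\{T_t^a\}_{t>0}$ is not conservative and not $L^p$-contractive, so that \cite[Theorem 3.3]{JR} and \cite[Corollary 4.5]{LeMX} do not apply; for $a<0$ the semigroup is not even bounded on $L^p$ for $p\ge n/\sigma$, so this route is closed in case (ii). Moreover, the weighted extrapolation you invoke needs unweighted boundedness on the whole open interval as input, and the weak $(1,1)$ bound of part (i) cannot come out of an $L^2$ spectral argument at all. Second, your kernel estimate is a dead end as stated: dominating the variation norm of the kernel by $\int_0^\infty(1+\sqrt t/|x|)^\sigma(1+\sqrt t/|y|)^\sigma t^{-n/2}e^{-c|x-y|^2/t}\,dt/t\lesssim |x-y|^{-n}$ produces a non-integrable kernel, which gives neither $L^p$-boundedness nor off-diagonal estimates near the diagonal nor weak $(1,1)$; and since no H\"older regularity in $x,y$ is available, Calder\'on--Zygmund theory cannot rescue it.

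The missing idea is the comparison with the Euclidean heat semigroup. The paper writes $V_\rho(\{t^\alpha\partial_t^\alpha T_t^a\})\le V_\rho(\{t^\alpha\partial_t^\alpha(T_t^a-W_t)\})+V_\rho(\{t^\alpha\partial_t^\alpha W_t\})$, quotes \cite{AB} for the classical part, and controls the difference by its total variation in $t$, i.e.\ by integral operators with kernel $\int_0^\infty u^{k-1}|\partial_u^k(T_u^a(x,y)-W_u(x-y))|\,du$. In the diagonal region $|y|\sim|x|$ with $u<|x|^2$ the Duhamel formula yields the crucial gain $\lesssim |x|^{-2}|x-y|^{-(n-2)}$, which \emph{is} integrable over the annulus; in the off-diagonal regions one gets Hardy-type kernels $|x|^{-n}$, $|x|^{\sigma-n}|y|^{-\sigma}$, $|y|^{-n}$, $|y|^{\sigma-n}|x|^{-\sigma}$, and it is the boundedness of these Hardy operators (not off-diagonal decay) that produces the sharp interval $(\tfrac{n}{n-\sigma},\tfrac{n}{\sigma})$ as well as the weak $(1,1)$ bound when $a\ge 0$. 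Your final step (off-diagonal estimates for $V_\rho\circ(I-A_{r_B^2}^a)^k$ on annuli plus the Bernicot--Zhao/Auscher--Martell weighted theorem, and the pointwise reduction $\lambda\,\Lambda(\cdot,\lambda)^{1/\rho}\lesssim V_\rho$) does match the paper, but it only works once the unweighted bounds on the full range are in hand.
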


\begin{thm}\label{Th1.2}
Let $\alpha \geq 0$, $\rho >2$ and $a\geq 0$. Suppose that $\{t_j\}_{j\in \mathbb{N}}\subset (0,\infty)$ is a decreasing sequence that converges to zero. Then, the operators $V_\rho (\{t^\alpha \partial _t^\alpha T_t^a\}_{t>0})$, $O(\{t^\alpha \partial _t^\alpha T_t^a\}_{t>0},\{t_j\}_{j\in \mathbb{N}})$ and $S_V(\{t^\alpha \partial _t^\alpha T_t^a\}_{t>0})$ are bounded from $H^1(\mathcal{L}_a)$ into $L^1(\mathbb{R}^n)$. The family $\{\lambda \Lambda (\{t^\alpha \partial _t^\alpha T_t^a\}_{t>0},\lambda )^{1/\rho}\}_{\lambda >0}$ is uniformly bounded from $H^1(\mathcal{L}_a)$ into $L^1(\mathbb{R}^n)$.

If $f\in L^1(\mathbb{R}^n)$, then $f\in H^1(\mathcal{L}_a)$ if and only if $V_\rho (\{T_t^a\}_{t>0})(f)\in L^1(\mathbb{R}^n)$. Furthermore, if $f\in L^1(\mathbb{R}^n)$ then
$$
\|f\|_{H^1(\mathcal{L}_a)}\sim \|f\|_{L^1(\mathbb{R}^n)}+\|V_\rho (\{T_t^a\}_{t>0})(f)\|_{L^1(\mathbb{R}^n)}.
$$
\end{thm}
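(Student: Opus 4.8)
The plan is to prove the $H^1(\mathcal{L}_a)$--$L^1(\mathbb{R}^n)$ boundedness first and then deduce the characterization from it. Write $\mathcal{U}$ for any of the operators $V_\rho(\{t^\alpha\partial_t^\alpha T_t^a\}_{t>0})$, $O(\{t^\alpha\partial_t^\alpha T_t^a\}_{t>0},\{t_j\}_{j\in\mathbb{N}})$, $S_V(\{t^\alpha\partial_t^\alpha T_t^a\}_{t>0})$. Each is sublinear (Minkowski's inequality in $\ell^\rho$ or $\ell^2$ together with the linearity of each $t^\alpha\partial_t^\alpha T_t^a$), and by Theorem \ref{Th1.1}(i) with $p=2$ and $w\equiv1\in A_2(\mathbb{R}^n)\cap RH_1(\mathbb{R}^n)$ each is bounded on $L^2(\mathbb{R}^n)$. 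Moreover the jump operator satisfies the pointwise domination $\lambda\,\Lambda(\{t^\alpha\partial_t^\alpha T_t^a\}_{t>0},\lambda)(f)^{1/\rho}\le C\,V_\rho(\{t^\alpha\partial_t^\alpha T_t^a\}_{t>0})(f)$, uniformly in $\lambda>0$ (a partition realizing $n$ jumps of size $>\lambda$ gives $n\lambda^\rho\le V_\rho^\rho$), so it suffices to treat $\mathcal{U}$. Fixing $M>1$, the identity $H^1(\mathcal{L}_a)=H^1_{at,M}(\mathcal{L}_a)$ reduces everything, by sublinearity together with a standard density argument based on the $L^2$-boundedness, to a \emph{uniform} estimate $\|\mathcal{U}(b)\|_{L^1(\mathbb{R}^n)}\le C$ over all $(2,M)$-atoms $b$.

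Fix such an atom $b=\mathcal{L}_a^M u$ associated with a ball $B=B(x_B,r_B)$, and split $\int_{\mathbb{R}^n}\mathcal{U}(b)=\int_{4B}+\int_{(4B)^c}$. On $4B$, Cauchy--Schwarz and the $L^2$-boundedness give $\int_{4B}\mathcal{U}(b)\le|4B|^{1/2}\|\mathcal{U}(b)\|_{L^2}\le C|B|^{1/2}\|b\|_{L^2}\le C$, using $\|b\|_{L^2}\le|B|^{-1/2}$ from condition (iii) with $k=M$. For the far part the key observation is that, for each of $V_\rho$, $O$ and $S_V$,
$$
\mathcal{U}(b)(x)\le\int_0^\infty\big|\partial_t\big(t^\alpha\partial_t^\alpha T_t^a b\big)(x)\big|\,dt,
$$
because the $\ell^\rho$- and $\ell^2$-norms of consecutive differences are dominated by the total one-variation over disjoint $t$-intervals (this crude bound is useless near $B$, which is exactly why $\rho>2$ and Theorem \ref{Th1.1} were needed there, but it is efficient where there is spatial decay). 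Commuting $\mathcal{L}_a^M$ through the semigroup via $T_t^a\mathcal{L}_a^M=(-1)^M\partial_t^M T_t^a$, the bound \eqref{1.3} (here $a\ge0$, so $\sigma\le0$ and the factors $(1+\sqrt t/|x|)^\sigma$ are harmless) together with the definition of the Weyl derivative yields
$$
\big|\partial_t\big(t^\alpha\partial_t^\alpha T_t^a\mathcal{L}_a^M\big)(x,y)\big|\le C\,t^{-n/2-M-1}e^{-c|x-y|^2/t},\quad x,y\in\mathbb{R}^n\setminus\{0\},\ t>0.
$$
Integrating in $t$ gives the size estimate $\int_0^\infty|\partial_t(t^\alpha\partial_t^\alpha T_t^a\mathcal{L}_a^M)(x,y)|\,dt\le C|x-y|^{-n-2M}$; inserting it into $\partial_t(t^\alpha\partial_t^\alpha T_t^a b)(x)=\int_B\partial_t(t^\alpha\partial_t^\alpha T_t^a\mathcal{L}_a^M)(x,y)u(y)\,dy$, using $|x-y|\gtrsim|x-x_B|$ for $x\notin4B$, $y\in B$, and $\|u\|_{L^1(B)}\le|B|^{1/2}\|u\|_{L^2}\le r_B^{2M}$ (condition (iii), $k=0$), gives $\int_{(4B)^c}\mathcal{U}(b)\le C\,r_B^{2M}\int_{(4B)^c}|x-x_B|^{-n-2M}dx\le C$. \textbf{The main technical obstacle here is the displayed kernel bound}: since $\partial_t^\alpha$ is non-local in $t$ (an integral over $[t,\infty)$), one must check that integrating $\partial_u^m T_u^a$ against $s^{m-\alpha-1}$ preserves enough Gaussian/polynomial decay in $|x-y|/\sqrt t$; this is where \eqref{1.2}--\eqref{1.3} and Cauchy's integral formula are used carefully.

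For the characterization assume $f\in L^1(\mathbb{R}^n)$ and take $\alpha=0$. One inequality is immediate: if $f\in H^1(\mathcal{L}_a)$ then $\|V_\rho(\{T_t^a\})(f)\|_{L^1}\le C\|f\|_{H^1(\mathcal{L}_a)}$ by the boundedness just established (with $\alpha=0$), while $\|f\|_{L^1}\le C\|f\|_{H^1(\mathcal{L}_a)}$ since every $(2,M)$-atom is supported in its ball $B$ and satisfies $\|b\|_{L^1}\le|B|^{1/2}\|b\|_{L^2}\le1$. For the reverse inequality the decisive point is the pointwise domination
$$
T_*^a(f)(x)=\sup_{t>0}|T_t^a(f)(x)|\le V_\rho(\{T_t^a\})(f)(x),\quad x\in\mathbb{R}^n\setminus\{0\},
$$
which holds because $T_t^a(f)(x)\to0$ as $t\to\infty$ (for $a\ge0$, \eqref{1.1} gives $|T_t^a(f)(x)|\le C t^{-n/2}\|f\|_{L^1}$), so that each $|T_t^a(f)(x)|=\lim_{s\to\infty}|T_t^a(f)(x)-T_s^a(f)(x)|$ is a single jump bounded by the full $\rho$-variation. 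Hence $\|T_*^a(f)\|_{L^1}\le\|V_\rho(\{T_t^a\})(f)\|_{L^1}<\infty$.

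It remains to upgrade ``$f$ has finite maximal norm'' to ``$f\in H^1(\mathcal{L}_a)$''. Since $f\in L^1$, the functions $f_\eps:=T_\eps^a(f)$ lie in $L^2(\mathbb{R}^n)$ and satisfy $T_*^a(f_\eps)=\sup_{t>0}|T_{t+\eps}^a(f)|\le T_*^a(f)$, so $f_\eps\in\{g\in L^2(\mathbb{R}^n):\|g\|_{H^1_h(\mathcal{L}_a)}<\infty\}$. One then shows $f_\eps\to f$ in $\|\cdot\|_{H^1_h(\mathcal{L}_a)}$: for a.e. $x$ the orbit $t\mapsto T_t^a(f)(x)$ is continuous on $(0,\infty)$ by the analyticity in \eqref{1.2}, and has finite limits at $0$ (finiteness of $V_\rho(\{T_t^a\})(f)(x)$ forces a regulated orbit) and at $\infty$ (it is $0$); being continuous on the compactification it is uniformly continuous, so $\sup_{t>0}|T_{t+\eps}^a(f)(x)-T_t^a(f)(x)|\to0$ as $\eps\to0$, while this quantity is dominated by $V_\rho(\{T_t^a\})(f)(x)\in L^1$. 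Dominated convergence gives $\|T_*^a(f_\eps-f)\|_{L^1}\to0$, hence $f\in H^1_h(\mathcal{L}_a)=H^1(\mathcal{L}_a)$ with $\|f\|_{H^1(\mathcal{L}_a)}\sim\|T_*^a(f)\|_{L^1}\le\|V_\rho(\{T_t^a\})(f)\|_{L^1}$, which together with the first inequality yields the stated equivalence. \textbf{The most delicate point in this part is the density step}, namely verifying that an $L^1$ function of finite maximal norm genuinely belongs to the completion defining $H^1(\mathcal{L}_a)$; the uniform continuity of the orbits is what makes the approximation converge.
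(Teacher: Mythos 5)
Your strategy coincides with the paper's: reduce to a uniform $L^1$ bound over $(2,M)$-atoms via the $L^2$-boundedness from Theorem~\ref{Th1.1} and the atomic characterization of $H^1(\mathcal L_a)$; estimate the part near $B$ by Cauchy--Schwarz; dominate all four operators on $(4B)^c$ by $\int_0^\infty|\partial_t(t^\alpha\partial_t^\alpha T_t^ab)|\,dt$ and use $b=\mathcal L_a^{(\cdot)}u$ to gain decay; and obtain the characterization from a pointwise domination of $T_*^a$ by $V_\rho(\{T_t^a\}_{t>0})$ plus the identification $H^1_h(\mathcal L_a)=H^1(\mathcal L_a)$. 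Your variants are harmless or better: you use all $M$ powers of $\mathcal L_a$ where the paper peels off only one (getting $|x-x_B|^{-n-2M}$ instead of $|x-x_B|^{-n-2}$); you absorb the maximal function into the variation using $T_s^af\to0$ as $s\to\infty$ where the paper adds $|T_1^af|$; and your completion/density step (approximating by $f_\varepsilon=T_\varepsilon^af\in L^2$ and using regulated orbits plus dominated convergence) is more detailed than the paper's, which essentially just invokes the heat-maximal characterization.

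The one step that fails as written is the displayed kernel bound $|\partial_t(t^\alpha\partial_t^\alpha T_t^a\mathcal L_a^M)(x,y)|\le Ct^{-n/2-M-1}e^{-c|x-y|^2/t}$. For non-integer $\alpha$ this is false, already for the classical heat semigroup: with $m-1\le\alpha<m$, the Weyl formula integrates $\partial_u^{m+M}T_u^a(x,y)$ over $u=t+s$, $s\in(0,\infty)$, and the range $s\sim|x-y|^2$ contributes a term of size comparable to $|x-y|^{-n-2\alpha-2M}$ at fixed $t$, i.e.\ only polynomial decay in $|x-y|^2/t$ (equivalently, the symbol of a fractional time derivative of the Gaussian is non-smooth at the origin, so its kernel has power-law tails). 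The conclusion you draw from it, $\int_0^\infty|\partial_t(t^\alpha\partial_t^\alpha T_t^ab)(x)|\,dt\lesssim r_B^{2M}|x-x_B|^{-n-2M}$, is nonetheless true, but it must be obtained by first trading the fractional order for an integer one \emph{under the $t$-integral}, via Fubini and the Beta function exactly as in \eqref{2.0}, which reduces matters to $\int_0^\infty t^{k-1}|\partial_t^kT_t^ab(x)|\,dt$ with $k\in\mathbb N$, and only then applying the Gaussian bound \eqref{1.3} for integer derivatives. With that substitution --- which is precisely how the paper proceeds --- your argument is complete.
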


As it was mentioned the maximal operator $T_*^a$ is bounded from $L^p(\mathbb{R}^n)$ into itself, for every $1<p<\infty$, and from $L^1(\mathbb{R}^n)$ into $L^{1,\infty }(\mathbb{R}^n)$, provided that $a\geq 0$. We now complete these results. For every $\alpha \geq 0$ we define the maximal operator
$$
T_{*,\alpha}^a(f)=\sup_{t>0}|t^\alpha \partial _t^\alpha T_t^a(f)|.
$$
We remark that \cite[Corollary 4.2]{LeMX2} does not apply in this case.
\begin{thm}\label{Th1.3}
Let $\alpha \geq 0$. The operator $T_{*,\alpha}^a$ is bounded from $L^p(\mathbb{R}^n,w)$ into itself when one of the following conditions holds.

(i) $1<p<\infty$, $w\in A_p(\mathbb{R}^n)\cap RH_1(\mathbb{R}^n)$ and $a\geq 0$;

(ii) $\frac{n}{n-\sigma}<p<\frac{n}{\sigma}$, $w\in A_{(n-\sigma)p/n}(\mathbb{R}^n)\cap RH_{(\frac{n}{\sigma p})'}(\mathbb{R}^n)$ and $-\frac{(n-2)^2}{4}<a<0$.

Furthermore, $T_{*,\alpha}^a$ is bounded from $L^1(\mathbb{R}^n)$ into $L^{1,\infty }(\mathbb{R}^n)$ when $a\geq 0$. 
\end{thm}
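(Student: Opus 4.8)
The plan is to reduce everything to a pointwise kernel estimate for $t^\alpha\partial_t^\alpha T_t^a$ and then to weighted bounds for (fractional) maximal operators. First I would record that the kernel of $t^\alpha\partial_t^\alpha T_t^a$ obeys the same bound as \eqref{1.3}. Writing the Weyl derivative as
\[
t^\alpha\partial_t^\alpha T_t^a(x,y)=\frac{t^\alpha e^{-i\pi(m-\alpha)}}{\Gamma(m-\alpha)}\int_0^\infty \partial_u^m T_u^a(x,y)\big|_{u=t+s}\,s^{m-\alpha-1}\,ds,
\]
inserting \eqref{1.3} with $k=m$, and splitting the $s$-integral at $s=t$, one obtains $C,c>0$ with
\[
|t^\alpha\partial_t^\alpha T_t^a(x,y)|\le C\Big(1+\tfrac{\sqrt t}{|x|}\Big)^\sigma\Big(1+\tfrac{\sqrt t}{|y|}\Big)^\sigma t^{-n/2}e^{-c|x-y|^2/t},\qquad x,y\neq 0,\ t>0.
\]
This estimate is also needed for Theorems \ref{Th1.1} and \ref{Th1.2}, so I would state it once as a lemma. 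It gives $T_{*,\alpha}^a f(x)\le C\,W_\sigma(f)(x)$, where $W_\sigma(f)(x)=\sup_{t>0}\int_{\mathbb R^n}(1+\sqrt t/|x|)^\sigma(1+\sqrt t/|y|)^\sigma t^{-n/2}e^{-c|x-y|^2/t}|f(y)|\,dy$, so the theorem becomes the weighted boundedness of the sublinear operator $W_\sigma$.

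For $a\ge 0$ one has $\sigma\le 0$, so both factors $(1+\sqrt t/|x|)^\sigma$ and $(1+\sqrt t/|y|)^\sigma$ are $\le 1$ and the kernel is dominated by the Gauss--Weierstrass family $t^{-n/2}e^{-c|x-y|^2/t}$, whose least radially decreasing majorant is integrable with $t$-independent mass. The standard majorant argument then yields $T_{*,\alpha}^a f\le C\,Mf$ pointwise, $M$ the Hardy--Littlewood maximal operator. Muckenhoupt's theorem gives boundedness on $L^p(\mathbb R^n,w)$ for every $1<p<\infty$ and $w\in A_p(\mathbb R^n)$ (note $RH_1(\mathbb R^n)$ is the class of all weights), together with the unweighted weak $(1,1)$ bound. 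This proves (i) and the final assertion.

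For $-\frac{(n-2)^2}{4}<a<0$ we have $0<\sigma<\frac{n-2}{2}$. Using $(1+\sqrt t/|x|)^\sigma\le C(1+(\sqrt t/|x|)^\sigma)$ and the analogue in $y$, I would expand the product of the two factors into four terms; recognising that $\sup_{t>0}t^{(\beta-n)/2}\int e^{-c|x-y|^2/t}|g(y)|\,dy$ is comparable to the fractional maximal operator $M_\beta g(x)=\sup_{r>0}r^{\beta-n}\int_{B(x,r)}|g|$, this yields
\[
T_{*,\alpha}^a f\le C\big(Mf+|x|^{-\sigma}M_\sigma f+M_\sigma(|\cdot|^{-\sigma}f)+|x|^{-\sigma}M_{2\sigma}(|\cdot|^{-\sigma}f)\big).
\]
Absorbing the power weights $|x|^{\pm\sigma}$ into the measure, the desired $L^p(w)\to L^p(w)$ bound for these summands is equivalent to the two-weight estimates $M_\sigma\colon L^p(w)\to L^p(|x|^{-\sigma p}w)$, $M_\sigma\colon L^p(|x|^{\sigma p}w)\to L^p(w)$ and $M_{2\sigma}\colon L^p(|x|^{\sigma p}w)\to L^p(|x|^{-\sigma p}w)$.

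The last step is the main obstacle: I must derive these two-weight bounds from $w\in A_{(n-\sigma)p/n}(\mathbb R^n)\cap RH_{(n/(\sigma p))'}(\mathbb R^n)$ and $\frac{n}{n-\sigma}<p<\frac{n}{\sigma}$, via the sufficient $A_{p,p}$-type (testing or bump) condition
\[
\sup_B |B|^{\beta/n}\Big(\frac1{|B|}\int_B v\Big)^{1/p}\Big(\frac1{|B|}\int_B u^{1-p'}\Big)^{1/p'}<\infty,
\]
with $(u,v)$ as above and $\beta=\sigma,2\sigma$. One must show that the $A_q\cap RH_s$ hypothesis (with $q=(n-\sigma)p/n<p$ and $s=(n/(\sigma p))'$) survives multiplication of the measure by $|x|^{\pm\sigma p}$ and produces this scale-invariant condition; here the reverse Hölder exponent controls the local interaction of $w$ with the singular power $|x|^{-\sigma}$, the lowered Muckenhoupt exponent compensates the $\sigma$-smoothing of $M_\sigma$, and the admissible $p$-range is exactly what keeps $(n-\sigma)p/n>1$ and the reverse Hölder exponent finite. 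Since this weighted machinery is the same one underlying Theorem \ref{Th1.1}, I would isolate it as a single lemma on $W_\sigma$ and invoke it here. Finally, for $\alpha>0$ one can alternatively observe that $t^\alpha\partial_t^\alpha T_t^a f\to0$ as $t\to0^+$, whence $T_{*,\alpha}^a f\le V_\rho(\{t^\alpha\partial_t^\alpha T_t^a\}_{t>0})f$ pointwise and the bound follows from Theorem \ref{Th1.1}; this shortcut fails at $\alpha=0$, so the kernel argument above is needed in any event.
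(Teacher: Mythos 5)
Your route is genuinely different from the paper's (the paper proves unweighted bounds by comparing $T_t^a$ with the Euclidean heat semigroup through a four-region kernel decomposition, using the Duhamel formula on the diagonal and a Hardy-type inequality for the off-diagonal pieces, and then upgrades to weighted bounds via the abstract criterion of \cite[Theorem 6.6]{BZ} applied with $A_t^a=I-(I-T_t^a)^k$), but as written it has two problems, one repairable and one substantive. The repairable one: your opening lemma is false for non-integer $\alpha$. In the Weyl integral the portion $s>t$ does not preserve Gaussian decay — already for the free heat kernel, $t^\alpha\partial_t^\alpha W_t(z)=t^{-n/2}\Psi_\alpha(z/\sqrt t)$ with $\widehat{\Psi_\alpha}(\xi)=c_\alpha|\xi|^{2\alpha}e^{-|\xi|^2}$, and the non-smoothness of $|\xi|^{2\alpha}$ at the origin forces only polynomial decay $|\Psi_\alpha(u)|\sim|u|^{-n-2\alpha}$. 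Splitting at $s=t$ as you propose, the tail yields $t^{-n/2}(1+|x-y|^2/t)^{-(n/2+\alpha)}$ (times the weight factors), not a Gaussian. That weaker bound still has an integrable radial majorant, so your domination $T_{*,\alpha}^af\le CMf$ for $a\ge 0$, hence part (i) and the weak $(1,1)$ bound, survives; alternatively you could use the paper's cleaner operator-level reduction $T_{*,\alpha}^a f\le CT_{*,m}^af$, which sidesteps the kernel issue entirely. For $a\ge 0$ your argument is then simpler than the paper's, since $A_p\cap RH_1=A_p$ and Muckenhoupt's theorem applies directly.

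The substantive gap is in part (ii). You reduce the theorem to the two-weight inequalities $M_\sigma\colon L^p(|x|^{\sigma p}w)\to L^p(w)$, $M_\sigma\colon L^p(w)\to L^p(|x|^{-\sigma p}w)$ and $M_{2\sigma}\colon L^p(|x|^{\sigma p}w)\to L^p(|x|^{-\sigma p}w)$, and then state that deriving these from $w\in A_{(n-\sigma)p/n}(\mathbb R^n)\cap RH_{(n/(\sigma p))'}(\mathbb R^n)$ is ``the main obstacle'' — but you never carry this out, and this is exactly the hard part of the theorem. Worse, the tool you propose is not adequate: the joint two-weight $A_{p,p}$-type condition you display is necessary but not sufficient for two-weight bounds of fractional maximal operators (one needs Sawyer's testing condition or a power-bumped condition), so even verifying your displayed condition would not close the argument. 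The paper avoids this entirely: the unweighted range $\frac{n}{n-\sigma}<p<\frac{n}{\sigma}$ comes from the one-weight-free Hardy inequality of \cite[Theorem 2.1]{DHK} applied to the specific operators $J_2^\sigma$, $J_4^\sigma$ supported in the off-diagonal regions, and the passage to $w\in A_{(n-\sigma)p/n}\cap RH_{(n/(\sigma p))'}$ is done wholesale by \cite[Theorem 6.6]{BZ}, whose hypotheses are the $L^p$--$L^q$ off-diagonal estimates \eqref{2.16} for $A_{r_B^2}^a$ and the corresponding estimates for $T_{*,m}^a\circ(I-A_{r_B^2}^a)$ obtained from the operators $S_{\ell,a}$. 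Unless you can actually prove the three two-weight inequalities above under the stated hypotheses, your proof of (ii) is incomplete; I would recommend adopting the paper's mechanism for the weighted step.
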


In the following sections we prove Theorems \ref{Th1.1}, \ref{Th1.2} and \ref{Th1.3}. In order to establish weighted inequalities in Theorems \ref{Th1.1} and \ref{Th1.3} due to the lack of regularity conditions of the heat kernel associated with $\mathcal{L}_a$ we can not apply Calder\'on-Zygmund theory even when $a\geq 0$. We will use \cite[Theorem 6.6]{BZ} and we need to prove unweighted $L^p$-boundedness properties before establishing weighted inequalities.

Throughout the paper by $c$ and $C$ we denote positive constants that can change in each occurrence.

\section{Proof of Theorem 1.1}

We first prove the unweighted $L^p$-boundedness properties for the variation operator \newline  $V_\rho(\{t^\alpha\partial_t^\alpha T_t^a\}_{t>0})$. The semigroup of operators $\{T_t^a\}_{t>0}$, when $a\not=0$, is not conservative. Indeed, since $\sigma<\frac{n-2}{2}$, (\ref{1.1}) implies that $\displaystyle\int_{\mathbb R^n}T_t^a(x,y)dy<\infty$, $t>0$ and $x\in\mathbb R^n\setminus\{0\}$, and that $L_a(T_t^a(1)(x))=\partial_ tT_t^a(1)(x)$, $t>0$ and $x\in\mathbb R^n\setminus\{0\}$. Since $L_a(1)(x)=\frac{a}{|x|^2}$, $x\in\mathbb R^n\setminus\{0\}$, it follows that $T_t^a(1)\neq 1$, $t>0$. Moreover, $\{T_t^a\}_{t>0}$ is not contractive in $L^p(\mathbb R^n)$, $1<p<\infty$. Hence, $\{T_t^a\}_{t>0}$ is not a difussion semigroup in the Stein's sense (\cite{StLP}). Then, \cite[Theorem 3.3]{JR}  and \cite[Corollary 4.5]{LeMX} do not apply to $\{T_t^a\}_{t>0}$.

We denote by $\{W_t\}_{t>0}$ the Euclidean heat semigroup in $\mathbb R^n$, that is,
$$W_t(f)(x)=\int_{\mathbb R^n} W_t(x-y)f(y)dy,\;\;\;\;x\in\mathbb R^n,\;\mbox{and}\;t>0,$$
where $\displaystyle W_t(z)=\frac{1}{(4\pi t)^{n/2}}e^{-|z|^2/4t}$, $z\in\mathbb R^n$ and $t>0$.

In \cite[p. 12]{AB} it was proved that the operator $V_\rho(\{t^\alpha\partial_t^\alpha W_t\}_{t>0})$ is bounded from $L^p(\mathbb R^n)$ into itself, for every $1<p<\infty$, and from $L^1(\mathbb R^n)$ into $L^{1,\infty}(\mathbb R^n)$.

We are going to consider the operator
$$
S_{\rho,\alpha}=V_\rho(\{t^\alpha\partial_t^\alpha(T_t^a-W_t)\}_{t>0}).$$
Let $m\in \mathbb{N}$ such that $m-1\leq \alpha <m$. If $0<t_k<t_{k-1}< ... <t_1<\infty$, with $k\in\mathbb N$,  we can write
\begin{align*}
 \left(\sum_{j=1}^{k-1}\Big|t^\alpha\partial_t^\alpha(T_t^a-W_t)(f)(x)_{|t=t_j}-t^\alpha\partial_t^\alpha(T_t^a-W_t)(f)(x)_{|t=t_{j+1}}\Big|^\rho\right)^{1/\rho}& \\
& \hspace{-10cm}= \left(\sum_{j=1}^{k-1}\left|\int_{t_{j+1}}^{t_j}\partial_t(t^\alpha\partial_t^\alpha(T_t^a-W_t)(f)(x))dt\right|^\rho\right)^{1/\rho} \leq \sum_{j=1}^{k-1}\int_{t_{j+1}}^{t_j}|\partial_t(t^\alpha\partial_t^\alpha(T_t^a-W_t)(f)(x))|dt\\
& \hspace{-10cm}\leq \int_0^\infty|\partial_t(t^\alpha\partial_t^\alpha(T_t^a-W_t)(f)(x))|dt,\quad x \in\mathbb R^n.
\end{align*}
By proceeding as in the proof of \cite[Theorem 2.3]{TZ} we can write that
$$\partial_t\partial_t^\alpha(T_t^a-W_t)(f)(x)=\partial_t^{\alpha +1}(T_t^a-W_t)(f)(x),\;\;\;\;x\in\mathbb R^n\;\mbox{and}\;t>0.$$
Then, we get
$$S_{\rho,\alpha}(f)(x)\leq \alpha\int_0^\infty|t^{\alpha-1}\partial_t^\alpha(T_t^a-W_t)(f)(x)|dt+ \int_0^\infty|t^\alpha\partial_t^{\alpha+1}(T_t^a-W_t)(f)(x)|dt,\;\;\;\;x\in\mathbb R^n.$$
Let $\beta >0$ and $k\in\mathbb N$ such that $k-1\leq\beta <k$. We obtain
\begin{align}\label{2.0}
\int_0^\infty t^{\beta -1}|\partial_t^\beta(T_t^a-W_t)(f)(x)|dt& \nonumber\\
&\hspace{-3cm} =\frac{1}{\Gamma(k-\beta )}\int_0^\infty t^{\beta -1}\Big|\int_t^\infty\partial_u^k(T_u^a-W_u)(f)(x)(u-t)^{k-\beta -1}du\Big|dt \nonumber \\ 
&\hspace{-3cm} \leq C\int_0^\infty|\partial_u^k(T_u^a-W_u)(f)(x))|\int_0^u (u-t)^{k-\beta -1}t^{\beta -1}dtdu \nonumber\\
&\hspace{-3cm} \leq C\int_0^\infty u^{k-1}|\partial_u^k(T_u^a-W_u)(f)(x))|du,\;\;\;\;x\in\mathbb R^n.
\end{align}

For every $k\in\mathbb N$, $k\geq 1$, we define
$$\mathbb S_k(f)(x)=\int_{\mathbb R^n} f(y) \mathbb{K}_k(x,y)dy,\quad x\in\mathbb R^n,$$
where
$$\mathbb{K}_k(x,y)=\int_0^\infty u^{k-1}|\partial_u^k(T_u^a(x,y)-W_u(x-y))|du,\quad x,y\in\mathbb R^n.$$
We have that
$$S_{\rho,\alpha}(f)\leq C(\mathbb S_m(f)+\mathbb S_{m+1}(f)).$$

Our next objective is to establish the $L^p$-boundedness properties for $\mathbb S_k$, $k\in\mathbb N$, $k\geq 1$.

Let $k\in\mathbb N$, $k\geq 1$. We decompose $\mathbb{K}_k$ as follows
\begin{align*}
\mathbb{K}_k(x,y &)= \mathcal X_{A_1}(x,y) \mathbb{K}_k(x,y)+  \mathcal X_{A_2}(x,y) \mathbb{K}_k(x,y) \\
& \quad + \mathcal X_{A_3}(x,y)\int_{|x|^2}^\infty u^{k-1}|\partial_u^k(T_u^a(x,y)-W_u(x-y))|du \\
& \quad + \mathcal{X}_{A_3}(x,y)\int_0^{|x|^2} u^{k-1}|\partial_u^k(T_u^a(x,y)-W_u(x-y))|du \\
& =\sum_{j=1}^4 \mathbb{K}_{k,j}(x,y),\;\;\;\;x,y\in\mathbb R^n\setminus\{0\},
\end{align*}
being $A_1=\{(x,y)\in\mathbb R^n:\;0<|y|<\frac{|x|}{2}\}$, $A_2=\{(x,y)\in\mathbb R^n:\;|y|>\frac{3|x|}{2}>0\}$ and $A_3=\{(x,y)\in\mathbb R^n:\;0<\frac{|x|}{2}\leq |y|\leq \frac{3|x|}{2}\}$.

We now estimate $\mathbb{K}_{k,j}$, $j=1,2,3,4$.

\noindent {\bf (I)} We have that
\begin{align*}
\mathbb{K}_{k,1}(x,y)&\leq  \mathcal X_{A_1}(x,y) \int_0^\infty u^{k-1}|\partial_u^k W_u(x-y)|du+\mathcal X_{A_1}(x,y) \int_0^\infty u^{k-1}|\partial_u^k T_u^a(x,y)|du \\
&= \mathbb{K}_{k,1,1}(x,y)+\mathbb{K}_{k,1,2}(x,y),\;\;\;\;x,y\in\mathbb R^n.
\end{align*}
Since
\begin{align}\label{2.1}
|\partial_u^k W_u(z)|\leq C\frac{e^{-c\frac{|z|^2}{u}}}{u^{\frac{n}{2}+k}},\;\;\;\;z\in\mathbb R^n\;\mbox{and}\;u>0,
\end{align}
we get
\begin{align*}
\mathbb{K}_{k,1,1}(x,y)\leq & C\mathcal X_{A_1}(x,y) \int_0^\infty \frac{e^{-c\frac{|x-y|^2}{u}}}{u^{\frac{n}{2}+1}}du\leq \frac{C}{|x-y|^n}\mathcal X_{A_1}(x,y)\leq \frac{C}{|x|^n}\mathcal X_{A_1}(x,y),\;\;\;\;x,y\in\mathbb R^n.
\end{align*}
According to (\ref{1.3}) we get
\begin{align*}
\mathbb{K}_{k,1,2}(x,y)&\leq  C\mathcal X_{A_1}(x,y) \int_0^\infty\left(1+\frac{\sqrt{u}}{|x|}\right)^\sigma\left(1+\frac{\sqrt{u}}{|y|}\right)^\sigma \frac{e^{-c\frac{|x-y|^2}{u}}}{u^{\frac{n}{2}+1}}du \\
& \leq C\mathcal X_{A_1}(x,y)\left\{\begin{array}{lll} 
\frac{1}{|x|^n}, & \sigma\leq 0 & \\
 & &\\
 \frac{1}{|x|^n}+\frac{1}{|x|^{n-\sigma}|y|^\sigma}, & 0<\sigma<\frac{n-2}{2} &  \end{array}\right. ,\quad x,y\in\mathbb R^n. 
\end{align*}

We conclude that
\begin{equation}\label{2.2}
\mathbb{K}_{k,1}(x,y)\leq C\mathcal X_{A_1}(x,y)\left\{\begin{array}{lll} 
\frac{1}{|x|^n}, & \sigma\leq 0 & \\
 & &\\
 \frac{1}{|x|^n}+\frac{1}{|x|^{n-\sigma}|y|^\sigma}, & 0<\sigma<\frac{n-2}{2} &  \end{array}\right. ,\quad x,y\in\mathbb R^n. 
\end{equation}

\noindent {\bf (II)} By proceeding as in the proof of (\ref{2.2}) we can obtain
\begin{equation}\label{2.3}
\mathbb{K}_{k,2}(x,y)\leq  C\mathcal X_{A_2}(x,y)\left\{\begin{array}{lll} 
\frac{1}{|y|^n}, & \sigma\leq 0 & \\
 & & \\
 \frac{1}{|y|^n}+\frac{1}{|y|^{n-\sigma}|x|^\sigma}, & 0<\sigma<\frac{n-2}{2} &  \end{array}\right.,\quad x,y\in\mathbb R^n. 
 \end{equation}

\noindent {\bf (III)} We have that
\begin{align*}
\mathbb{K}_{k,3}(x,y)&\leq  \mathcal X_{A_3}(x,y) \left(\int_{|x|^2}^\infty u^{k-1}|\partial_u^k W_u(x-y)|du+\int_{|x|^2}^\infty u^{k-1}|\partial_u^k T_u^a(x,y)|du\right) \\
& = \mathbb{K}_{k,3,1}(x,y)+\mathbb{K}_{k,3,2}(x,y),\;\;\;\;x,y\in\mathbb R^n.
\end{align*}
From (\ref{2.1}) we deduce
\begin{align*}
\mathbb{K}_{k,3,1}(x,y)\leq & C\mathcal X_{A_3}(x,y) \int_{|x|^2}^\infty \frac{du}{u^{\frac{n}{2}+1}}\leq \frac{C}{|x|^n}\mathcal X_{A_3}(x,y) ,\;\;\;\;x,y\in\mathbb R^n.
\end{align*}
By using (\ref{1.3}) we get
\begin{align*}
\mathbb{K}_{k,3,2}(x,y)&\leq  C\mathcal X_{A_3}(x,y) \int_{|x|^2}^\infty\left(1+\frac{\sqrt{u}}{|x|}\right)^\sigma\left(1+\frac{\sqrt{u}}{|y|}\right)^\sigma \frac{e^{-c\frac{|x-y|^2}{u}}}{u^{\frac{n}{2}+1}}du \\
& \leq C\mathcal X_{A_3}(x,y)\left\{\begin{array}{ll} 
\displaystyle \int_{|x|^2}^\infty \frac{du}{u^{\frac{n}{2}+1}}, & \sigma\leq 0,  \\
 &  \\
\displaystyle \frac{1}{|x|^{2\sigma}}\int_{|x|^2}^\infty \frac{du}{u^{\frac{n}{2}-\sigma +1}}, & 0<\sigma<\frac{n-2}{2}, 
\end{array}\right. \\
& \leq \frac{C}{|x|^n}\mathcal X_{A_3}(x,y),\;\;\;\;x,y\in\mathbb R^n.
\end{align*}
Hence,
\begin{align}\label{2.4}
\mathbb{K}_{k,3}(x,y)\leq & \frac{C}{|x|^n}\mathcal X_{A_3}(x,y)
,\quad x,y\in\mathbb R^n. 
\end{align}

\noindent {\bf (IV)} According to Duhamel formula we get, for each $x,y\in\mathbb R^n$ and $t>0$,
\begin{align*}
& T_t^a(x,y)-W_t(x-y)=-a\int_0^t\int_{\mathbb R^n} W_{t-s}(x-z)|z|^{-2}T_s^a(z,y)dz ds \\
& =-a\int_0^{t/2}\int_{\mathbb R^n}W_{t-s}(x-z)|z|^{-2}T_s^a(z,y)dzds-a\int_0^{t/2}\int_{\mathbb R^n}W_s(x-z)|z|^{-2}T_{t-s}^a(z,y)dzds.
\end{align*}
It follows that
\begin{align*}
|\partial_t^k(T_t^a(x,y)-W_t(x-y))|&\leq C\left(\sum_{j=0}^{k-1}\int_{\mathbb R^n}|\partial_t^j(W_{\frac{t}{2}}(x-z))||\partial_t^{k-1-j}(T_{t/2}^a(z,y))||z|^{-2}dz \right. \\
&\quad +\int_0^{t/2}\int_{\mathbb R^n}|\partial_u^k W_u(x-z)_{|u=t-s}||T_s^a(z,y)||z|^{-2}dzds\\
&\left.\quad +\int_0^{t/2}\int_{\mathbb R^n}| W_s(x-z)||\partial_u^k T_u^a(z,y)|_{|u=t-s}|z|^{-2}dzds\right) \\
& =\sum_{j=0}^{k+1}H_j(t,x,y),\;x,y\in\mathbb R^n\;\mbox{and}\;t>0,
\end{align*}
where, for every $x,y\in\mathbb R^n$ and $t>0$,
$$H_j(t,x,y)=\int_{\mathbb R^n}|\partial_t^j(W_{\frac{t}{2}}(x-z))||\partial_t^{k-1-j}(T_{t/2}^a(z,y))||z|^{-2}dz,\;\;\;j=0,1,...,k-1,$$
$$H_k(t,x,y)=\int_0^{t/2}\int_{\mathbb R^n}|\partial_u^k W_u(x-z)_{|u=t-s}||T_s^a(z,y)||z|^{-2}dzds,$$
and
$$H_{k+1}(t,x,y)=\int_0^{t/2}\int_{\mathbb R^n}| W_s(x-z)||\partial_u^k T_u^a(z,y)|_{|u=t-s}|z|^{-2}dzds.$$
Assume that $a\geq 0$. Then $\sigma\leq 0$. Let $j=0,1,...,k-1$. According to (\ref{1.3}) and (\ref{2.1}) we get
$$H_j(t,x,y)\leq C\int_{\mathbb R^n}\frac{e^{-c\frac{|x-z|^2}{t}}e^{-c\frac{|z-y|^2}{t}}}{t^{n+k-1}}|z|^{-2}dz,\;\;\;x,y\in\mathbb R^n\;\mbox{and}\; t>0.$$
By using \cite[(28)]{BADLL} we obtain
$$H_j(t,x,y)\leq C\frac{e^{-c\frac{|x-y|^2}{t}}}{|x|^2t^{n/2+k-1}},\;\;\;\;x,y\in\mathbb R^n\;\mbox{and}\;t>0.$$
Since $n\geq 3$, it follows that
\begin{align}\label{2.5}
& \mathcal X_{A_3}(x,y)\int_0^{|x|^2} t^{k-1}H_j(t,x,y)dt\leq \frac{C}{|x|^2}\int_0^{|x|^2}\frac{e^{-c\frac{|x-y|^2}{t}}}{t^{n/2}}dt \nonumber \\
& \leq \frac{C}{|x|^2|x-y|^{n-2}}\int_{\frac{|x-y|^2}{|x|^2}}^\infty e^{-cu} u^{n/2-2}du \nonumber \\
& \leq \frac{C}{|x|^2|x-y|^{n-2}},\;\;\;\;x,y\in\mathbb R^n,\;x\neq y. 
\end{align}

Since $\frac{|x-z|^2}{t-s}+\frac{|z-y|^2}{s}\geq c\frac{|x-y|^2}{t}$, $0<s<t$, (\ref{1.3}) and (\ref{2.1}) lead to
\begin{align*}
H_k(t,x,y) & \leq C \int_0^{t/2}\int_{\mathbb R^n}\frac{e^{-c\frac{|x-z|^2}{t-s}}}{(t-s)^{n/2+k}}\frac{e^{-c\frac{|z-y|^2}{s}}}{s^{n/2}}|z|^{-2}dzds,\\
&\leq C \frac{e^{-c\frac{|x-y|^2}{t}}}{t^{n/2+k}}\int_0^{t/2}\int_{\mathbb R^n}\frac{e^{-c\frac{|z-y|^2}{s}}}{s^{n/2}}|z|^{-2}dzds.
\end{align*}
By using \cite[(28)]{BADLL} we obtain
$$H_k(t,x,y)\leq C\frac{e^{-c\frac{|x-y|^2}{t}}}{t^{n/2+k-1}}\frac{1}{|y|^2},\;\;\;\;x,y\in\mathbb R^n,\;\mbox{and}\;t>0.$$
Then,
\begin{align}\label{2.6}
 \mathcal X_{A_3}(x,y)\int_0^{|x|^2} t^{k-1}H_k(t,x,y)dt\leq \frac{C}{|x|^2|x-y|^{n-2}},\;\;\;\;x,y\in\mathbb R^n,\;x\neq y. 
\end{align}
In a similar way we get
\begin{align}\label{2.7}
 \mathcal X_{A_3}(x,y)\int_0^{|x|^2} t^{k-1}H_{k+1}(t,x,y)dt\leq \frac{C}{|x|^2|x-y|^{n-2}},\;\;\;\;x,y\in\mathbb R^n,\;x\neq y. 
\end{align}
By putting together (\ref{2.5}), (\ref{2.6}) and (\ref{2.7}) we conclude that, when $a\geq 0$,
\begin{align}\label{2.8}
 \mathbb{K}_{k,4}(x,y)\leq C\mathcal X_{A_3}(x,y)\frac{1}{|x|^2|x-y|^{n-2}},\;\;\;\;x,y\in\mathbb R^n,\;x\neq y. 
\end{align}

Assume now that $-\frac{(n-2)^2}{4}<a<0$. Then, $0<\sigma<\frac{n-2}{2}$. Let $j=0,1,...,k-1$. By using again (\ref{1.3}) and (\ref{2.1}) and by taking into account that, since $\sigma +2 <n$,
\begin{align}\label{2.9}
\int_{\mathbb R^n}\frac{e^{-c\frac{|x-z|^2}{t}}}{t^{n/2}}\left(1+\frac{\sqrt{s}}{|z|}\right)^\sigma |z|^{-2}dz\leq\frac{C}{|x|^2},\;\;\;\;0<s\leq t<|x|^2\;\mbox{and}\;x\in\mathbb R^n, 
\end{align}
we obtain
\begin{align*}
H_j(t,x,y) & \leq C \int_{\mathbb R^n}\frac{e^{-c\frac{|x-z|^2}{t}}e^{-c\frac{|z-y|^2}{t}}}{t^{n+k-1}}\left(1+\frac{\sqrt{t}}{|z|}\right)^\sigma \left(1+\frac{\sqrt{t}}{|y|}\right)^\sigma |z|^{-2}dz \\
&\leq C \frac{e^{-c\frac{|x-y|^2}{t}}}{t^{n/2+k-1}}\left(1+\frac{\sqrt{t}}{|y|}\right)^\sigma\frac{1}{|x|^2} \\
& \leq C \frac{e^{-c\frac{|x-y|^2}{t}}}{t^{n/2+k-1}}\frac{1}{|x|^2},\;\;\;\;(x,y)\in A_3,\;0<t<|x|^2.
\end{align*}
Since $n\geq 3$ it follows that
\begin{align}\label{2.10}
\mathcal X_{A_3}(x,y)\int_0^{|x|^2} t^{k-1}H_{j}(t,x,y)dt\leq \frac{C}{|x|^2|x-y|^{n-2}},\;\;\;\;x,y\in\mathbb R^n,\;x\neq y. 
\end{align}

On the other hand, according to (\ref{2.9}) we can write
\begin{align*}
H_k(t,x,y) & \leq C \int_0^{t/2}\int_{\mathbb R^n}\frac{e^{-c\frac{|x-z|^2}{t-s}}}{(t-s)^{n/2+k}}\frac{e^{-c\frac{|z-y|^2}{s}}}{s^{n/2}}\left(1+\frac{\sqrt{s}}{|z|}\right)^\sigma \left(1+\frac{\sqrt{s}}{|y|}\right)^\sigma |z|^{-2}dzds \\
&\leq C \frac{e^{-c\frac{|x-y|^2}{t}}}{t^{n/2+k}}\int_0^{t/2}\int_{\mathbb R^n}\frac{e^{-c\frac{|y-z|^2}{s}}}{s^{n/2}}\left(1+\frac{\sqrt{s}}{|z|}\right)^\sigma |z|^{-2}dzds \\
& \leq C \frac{e^{-c\frac{|x-y|^2}{t}}}{t^{n/2+k-1}}\frac{1}{|x|^2},\;\;\;\;(x,y)\in A_3,\;0<t<|x|^2.
\end{align*}
As in (\ref{2.6}) we get
\begin{align}\label{2.11}
\mathcal X_{A_3}(x,y)\int_0^{|x|^2} t^{k-1}H_{k}(t,x,y)dt\leq \frac{C}{|x|^2|x-y|^{n-2}},\;\;\;\;x,y\in\mathbb R^n,\;x\neq y. 
\end{align}
In a similar way we obtain
\begin{align}\label{2.12}
\mathcal X_{A_3}(x,y)\int_0^{|x|^2} t^{k-1}H_{k+1}(t,x,y)dt\leq \frac{C}{|x|^2|x-y|^{n-2}},\;\;\;\;x,y\in\mathbb R^n,\;x\neq y. 
\end{align}
From (\ref{2.10}), (\ref{2.11}) and (\ref{2.12}) we deduce that, when $-\frac{(n-2)^2}{4}<a<0$,
\begin{align}\label{2.13}
 \mathbb{K}_{k,4}(x,y)\leq C\mathcal X_{A_3}(x,y)\frac{1}{|x|^2|x-y|^{n-2}},\;\;\;\;x,y\in\mathbb R^n,\;x\neq y. 
\end{align}

We now consider the following operators

\begin{align*}
& J_1(f)(x)=\frac{1}{|x|^n}\int_{\mathbb R^n}\mathcal X_{A_1}(x,y)f(y)dy,\;\;\;\;x\in\mathbb R^n;\\
& J_2^\sigma(f)(x)=\frac{1}{|x|^{n-\sigma}}\int_{\mathbb R^n}\frac{\mathcal X_{A_1}(x,y)}{|y|^\sigma}f(y)dy,\;\;\;\;x\in\mathbb R^n\;\mbox{and}\;0<\sigma<\frac{n-2}{2};\\
& J_3(f)(x)=\int_{\mathbb R^n}\frac{\mathcal X_{A_2}(x,y)}{|y|^n}f(y)dy,\;\;\;\;x\in\mathbb R^n;\\
& J_4^\sigma(f)(x)=\frac{1}{|x|^{\sigma}}\int_{\mathbb R^n}\frac{\mathcal X_{A_2}(x,y)}{|y|^{n-\sigma}}f(y)dy,\;\;\;\;x\in\mathbb R^n\;\mbox{and}\;0<\sigma<\frac{n-2}{2};\\
& J_5(f)(x)=\frac{1}{|x|^n}\int_{\mathbb R^n}\mathcal X_{A_3}(x,y)f(y)dy,\;\;\;\;x\in\mathbb R^n;\\
& J_6(f)(x)=\frac{1}{|x|^2}\int_{\mathbb R^n}\frac{\mathcal X_{A_3}(x,y)}{|x-y|^{n-2}}f(y)dy,\;\;\;\;x\in\mathbb R^n.
\end{align*}
We have that
$$|J_j(f)|\leq C\mathcal M(f),\;\;\;\;j=1,5,$$
where $\mathcal M$ denotes the Hardy-Littlewood  maximal operator. Then $J_1$ and $J_5$ are bounded operators from $L^p(\mathbb R^n)$ into itself, for every $1<p\leq \infty$, and from $L^1(\mathbb R^n)$ into $L^{1,\infty}(\mathbb R^n)$.

For every $f\in L^1(\mathbb R^n)$ we can write
\begin{align*}
\int_{\mathbb R^n}|J_3(f)(x)|dx & \leq \int_{\mathbb R^n}\int_{\mathbb R^n}\mathcal X_{A_2}(x,y)|f(y)|\frac{dy}{|y|^n}dx \\
&\leq C \int_{\mathbb R^n}|f(y)|\frac{1}{|y|^n}\int_{B(0,|y|)}dxdy\leq C\|f\|_{L^1(\mathbb R^n)}.
\end{align*}

On the other hand, by using duality between $J_3$ and $J_1$ we deduce that $J_3$ is bounded from $L^p(\mathbb R^n)$ into itself, for every $1< p <\infty$.

We have that
\begin{align*}
\frac{1}{|x|^2} & \int_{\mathbb R^n}\frac{\mathcal X_{A_3}(x,y)}{|x-y|^{n-2}}dy=\frac{1}{|x|^2} \int_{B(0,\frac{3|x|}{2})\setminus B(0,\frac{|x|}{2})}\frac{dy}{|x-y|^{n-2}}\leq \frac{1}{|x|^2} \int_{B(0,3|x|)}\frac{dy}{|y|^{n-2}} \\
& \leq\frac{C}{|x|^2} \int_0^{3|x|}\rho d\rho\leq C,\;\;\;\;x\in\mathbb R^n,
\end{align*}
and, in a similar way we can get
\begin{align*}
\int_{\mathbb R^n}\frac{\mathcal X_{A_3}(x,y)}{|x|^2|x-y|^{n-2}}dx\leq C\int_{\mathbb R^n}\frac{\mathcal X_{A_3}(x,y)}{|y|^2|x-y|^{n-2}}dx\leq C,\;\;\;\;y\in\mathbb R^n.
\end{align*}
Then, the operator $J_6$ is bounded from $L^p(\mathbb R^n)$ into itself, for every $1\leq p\leq\infty$.

According to \cite[Theorem 2.1]{DHK} we deduce that the operator $J_2^\sigma$ is bounded from $L^p(\mathbb R^n)$ into itself provided that $\frac{n}{n-\sigma}<p<\infty$. By using duality we can prove that $J_4^\sigma$ is bounded from $L^p(\mathbb R^n)$ into itself for every $1<p<\frac{n}{\sigma}$.

By putting together (\ref{2.2}), (\ref{2.3}), (\ref{2.4}), (\ref{2.8}) and (\ref{2.13}) we conclude that the operator $\mathbb S_k$ is bounded from $L^p(\mathbb R^n)$ into itself, for every $1< p<\infty$, and from $L^1(\mathbb R^n)$ into $L^{1,\infty}(\mathbb R^n)$, when $a\geq 0$, and from $L^p(\mathbb R^n)$ into itself when $-\frac{(n-1)^2}{4}<a<0$ and $\frac{n}{n-\sigma}<p<\frac{n}{\sigma}$. Note that $\sigma<\frac{n-2}{2}$.

The $L^p$-boundedness properties that we have just proved for $\mathbb S_k$, $k\in\mathbb N$, are also satisfied for the operator $S_{\rho,\alpha}$.

As it was mentioned, in \cite[p. 12]{AB} it was proved that the operator $V_\rho(\{t^\alpha\partial_t^\alpha W_t\}_{t>0})$ is bounded from $L^p(\mathbb R^n)$ into itself, for every $1< p<\infty$, and from $L^1(\mathbb R^n)$ into $L^{1,\infty}(\mathbb R^n)$.

Since $V_\rho(\{t^\alpha\partial_t^\alpha T_t^a\}_{t>0})(f)\leq S_{\rho,\alpha}(f)+V_\rho(\{t^\alpha\partial_t^\alpha W_t\}_{t>0})(f)$, we conclude that $V_\rho(\{t^\alpha\partial_t^\alpha T_t^a\}_{t>0})$ is bounded from $L^p(\mathbb R^n)$ into itself, for every $1< p<\infty$, and from $L^1(\mathbb R^n)$ into $L^{1,\infty}(\mathbb R^n)$, when $a\geq 0$, and from $L^p(\mathbb R^n)$ into itself when $-\frac{(n-1)^2}{4}<a<0$ and $\frac{n}{n-\sigma}<p<\frac{n}{\sigma}$.

\begin{figure}[htbp]\label{Fig1}
\centering
\begin{minipage}{.5\textwidth}
  \centering
  \begin{tikzpicture}[scale=1]
    
    \draw (3.4,-0.3) node {$\sigma$};
    \draw (-0.3,3.4) node {$\frac{1}{p}$};
    
    \draw[-] (1,-0.05) -- (1,0.05);
  	\draw (1,-0.3) node {$\frac{n}{2}$};

    \draw[-] (-0.05, 1.5) -- (0.05,1.5);     
    \draw (-0.3,1.7) node {1};

	\fill[color=lightgray]
        (-4.5,1.5) -- (0,1.5) -- (0,0) -- (-4.5,0) -- (-4.5,1.5);
    \fill[color=lightgray]
        (0,1.5) -- (1,0.75) -- (0,0) -- (0,1.5);
     \draw[->] (0,-0.5) -- (0,3.5) ;
     
     	\draw[dashed, thick]  (0,1.5) -- (1,0.75) -- (0,0);  
	\draw[-, thick] (-4.5,1.5) -- (0,1.5);
	
	 \draw[->] (-4.5,0) -- (3.5,0) ;
        
       \draw (-3,0.75) node {\textcolor{white}{strong type}};
    \draw (-3,2) node {{weak type}};
    \draw[->] (-3,1.9) -- (-3,1.6);
     \draw[-] (-0.05, 0.75) -- (0.05,0.75);     
    \draw (-0.1,0.75) node{$\frac{1}{2}$};
 \end{tikzpicture}
  %\captionof{Figure \ref{Fig1}: }
  \label{fig:Fig1}
\end{minipage}
\end{figure}

\begin{rem}
This situation, as we can see in the graphic, remind us of the pencil phenomenom described in the Laguerre setting by \cite{MST} (see also \cite{NSj}).
\end{rem}

The arguments used in this part of the proof allow us to establish the following property.

\begin{prop}
The family of operators $\{\lambda(\Lambda(\{T_t^a\}_{t>0}),\lambda)^{1/2}\}_{\lambda >0}$ is uniformly bounded from $L^p(\mathbb R^n)$ into itself provided that $1< p<\infty$  when $a\geq 0$, and $\frac{n}{n-\sigma}<p<\frac{n}{\sigma}$ when $-\frac{(n-1)^2}{4}<a<0$.
\end{prop}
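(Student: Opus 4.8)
The plan is to re-use the decomposition $T_t^a=W_t+(T_t^a-W_t)$ already exploited for the variation operator, but now to split the \emph{count} of $\lambda$-jumps additively instead of estimating a single $\rho$-variation. I would first record two elementary facts about the jump operator. On the one hand, the number of disjoint $\lambda$-jumps of a sum is controlled by the jumps of each summand at half the height,
$$\Lambda(\{S_t+R_t\}_{t>0},\lambda)(f)(x)\le \Lambda(\{S_t\}_{t>0},\tfrac{\lambda}{2})(f)(x)+\Lambda(\{R_t\}_{t>0},\tfrac{\lambda}{2})(f)(x),$$
because in each jump interval at least one of the two summands must oscillate by more than $\lambda/2$. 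On the other hand, if there are $N$ disjoint $\lambda$-jumps of $\{S_t\}_{t>0}$, then the $\rho$-variation along the corresponding partition is at least $N^{1/\rho}\lambda$, whence
$$\lambda\,\Lambda(\{S_t\}_{t>0},\lambda)(f)(x)^{1/\rho}\le V_\rho(\{S_t\}_{t>0})(f)(x),\quad \lambda>0.$$

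Combining these two facts with $(u+v)^{1/2}\le u^{1/2}+v^{1/2}$ and writing $\mu=\tfrac{\lambda}{2}$, I would reduce the statement to the pointwise bound
$$\lambda\,\Lambda(\{T_t^a\}_{t>0},\lambda)(f)^{1/2}\le C\Big(\mu\,\Lambda(\{W_t\}_{t>0},\mu)(f)^{1/2}+V_2(\{T_t^a-W_t\}_{t>0})(f)\Big).$$
The heat term is handled by the theory of symmetric diffusion semigroups: $\{W_t\}_{t>0}$ \emph{is} a diffusion semigroup in Stein's sense, so \cite[Theorem 3.3]{JR} (or \cite[Corollary 4.5]{LeMX}) applies to it and yields that $\{\mu\,\Lambda(\{W_t\}_{t>0},\mu)^{1/2}\}_{\mu>0}$ is uniformly bounded on $L^p(\mathbb R^n)$ for every $1<p<\infty$. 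This is precisely the endpoint estimate at exponent $2$ which fails for the full $2$-variation but holds for the jump operator.

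For the difference term the key observation is that the estimate obtained above for $S_{\rho,0}=V_\rho(\{T_t^a-W_t\}_{t>0})$ in fact came from the \emph{total variation}, namely
$$V_\rho(\{T_t^a-W_t\}_{t>0})(f)(x)\le \int_0^\infty|\partial_t(T_t^a-W_t)(f)(x)|\,dt=\mathbb{S}_1(f)(x),$$
and this inequality is valid for every $\rho\ge 1$, in particular for $\rho=2$. Hence the same computation gives $V_2(\{T_t^a-W_t\}_{t>0})(f)\le C\mathbb{S}_1(f)$, and $\mathbb{S}_1$ was shown via the kernel bounds (\ref{2.2})--(\ref{2.13}) to be bounded on $L^p(\mathbb R^n)$ for $1<p<\infty$ when $a\ge 0$, and for $\frac{n}{n-\sigma}<p<\frac{n}{\sigma}$ when $-\frac{(n-2)^2}{4}<a<0$. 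Putting the heat and difference estimates together and intersecting the two $p$-ranges (the heat term imposing only $1<p<\infty$) gives the uniform $L^p$-boundedness of $\{\lambda\,\Lambda(\{T_t^a\}_{t>0},\lambda)^{1/2}\}_{\lambda>0}$ in the two stated regimes.

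The main obstacle is conceptual rather than computational: one must recognize that the endpoint exponent $\rho=2$ is admissible for the jump operator precisely because the troublesome non-difference part is the \emph{diffusive} heat semigroup, for which the jump inequality is available, while the non-diffusive perturbation $T_t^a-W_t$ is so regular in $t$ that even its total variation is pointwise dominated by the already-controlled operator $\mathbb{S}_1$. The subadditivity of the jump count and the jump-versus-variation inequality are the two elementary lemmas that legitimize this splitting; everything else is inherited from the work done for $V_\rho(\{t^\alpha\partial_t^\alpha T_t^a\}_{t>0})$.
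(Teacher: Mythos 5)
Your argument is correct and follows the paper's proof essentially verbatim: the same splitting $T_t^a=W_t+(T_t^a-W_t)$, the same two elementary facts (subadditivity of the jump count at half height and the jump-versus-variation inequality from \cite{JSW}), the same reduction of $V_2(\{T_t^a-W_t\}_{t>0})$ to the total variation operator $\mathbb S_1$, and the same appeal to the known exponent-$2$ jump inequality for the Euclidean heat semigroup. The only discrepancy is the reference for that last fact: the paper cites \cite{ZK} rather than \cite[Theorem 3.3]{JR} or \cite[Corollary 4.5]{LeMX}, which are $\rho$-variation results for $\rho>2$ and would by themselves only give the uniform bound for $\lambda\Lambda(\{W_t\}_{t>0},\lambda)^{1/\rho}$ with $\rho>2$, not the endpoint exponent $2$ you need.
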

\begin{proof}
According to \cite[p. 6712]{JSW} we have that
$$\lambda[\Lambda(\{T_t^a-W_t\}_{t>0},\lambda)(f)]^{1/2}\leq CV_2(\{T_t^a-W_t\}_{t>0})(f),\;\;\;\;\lambda >0.$$
As in (\ref{2.0}) we get
$$V_2(\{T_t^a-W_t\}_{t>0})(f)(x)\leq\int_0^\infty |\partial_t(T_t^a-W_t)(f)(x)|dt,\;\;\;\;x\in\mathbb R^n.$$
By using \cite[(3)]{JSW} it follows that
\begin{align*}
 \lambda[(\Lambda(\{T_t^a\}_{t>0}),\lambda)(f)(x)]^{1/2} &\\
 &\hspace{-3cm}\leq C\lambda\left([(\Lambda(\{T_t^a-W_t\}_{t>0}),\frac{\lambda}{2})(f)(x)]^{1/2}+[(\Lambda(\{W_t\}_{t>0}),\frac{\lambda}{2})(f)(x)]^{1/2}\right) \\
&\hspace{-3cm} \leq\left(\int_0^\infty |\partial_t(T_t^a-W_t)(f)(x)|dt+\lambda[(\Lambda(\{W_t\}_{t>0}),\frac{\lambda}{2})(f)(x)]^{1/2} \right),\;\;\;\;x\in\mathbb R^n\;\mbox{and}\;\lambda>0.
\end{align*}
Since the family of operators $\{\lambda[\Lambda\{W_t\}_{t>0},\lambda)]^{1/2}\}_{\lambda >0}$ is uniformly bounded from $L^p(\mathbb R^n)$ into itself, for every $1<p<\infty$ (see \cite[Corollary, p. 7]{ZK}), and as it was showed above, the operator $\mathbb S_1$ is bounded from $L^p(\mathbb R^n)$ into itself for every $1<p<\infty$ when $a\geq 0$, and for every $\frac{n}{n-\sigma}<p<\frac{n}{\sigma}$ when $-\frac{(n-1)^2}{4}<a<0$, the proof of this proposition can be finished.
\end{proof}

\begin{rem}
We now state a negative result for the variation operator $V_\rho(\{T_t^a\}_{t>0})$ at the endpoint $p=\frac{n}{n-\sigma}$, when $0<\sigma<\frac{n-2}{2}$.
\end{rem}

\begin{prop}\label{Prop2.2}
Assume that $-\frac{(n-2)^2}{4}<a<0$. The operator $V_\rho(\{T_t^a\}_{t>0})$ is not bounded from $L^{\frac{n}{n-\sigma}}(\mathbb R^n)$ into $L^{\frac{n}{n-\sigma},\infty}(\mathbb R^n)$.
\end{prop}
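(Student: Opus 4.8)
The plan is to disprove the weak-type $(p_0,p_0)$ bound at $p_0=\frac{n}{n-\sigma}$ (so that $p_0'=\frac{n}{\sigma}$) by exhibiting a family $\{f_N\}_N$ that stays bounded in $L^{p_0}(\mathbb R^n)$ while $\|V_\rho(\{T_t^a\}_{t>0})(f_N)\|_{L^{p_0,\infty}(\mathbb R^n)}\to\infty$. The engine is a pointwise lower bound for the variation operator coming from the lower Gaussian estimate in \eqref{1.1}. Concretely, I would first show that for every nonnegative $f$ supported in $B(0,1)$ and every $x$ with $|x|>2$ one has
\[
V_\rho(\{T_t^a\}_{t>0})(f)(x)\geq \frac{c}{|x|^{n-\sigma}}\int_{\mathbb R^n}\frac{f(y)}{|y|^\sigma}\,dy .
\]
To get this, I restrict the defining chain to two times $t_2<t_1=|x|^2$, so that $V_\rho(\{T_t^a\}_{t>0})(f)(x)\geq |T^a_{|x|^2}(f)(x)-T^a_{t_2}(f)(x)|$, and let $t_2\downarrow 0$. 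Since $f$ is supported away from $x$, the upper bound in \eqref{1.1} forces $T^a_{t_2}(f)(x)\to 0$, so the right-hand side tends to $T^a_{|x|^2}(f)(x)$. Evaluating the lower bound in \eqref{1.1} at $t=|x|^2$ and using that $\sqrt{t}/|x|=1$, $\sqrt{t}/|y|=|x|/|y|\geq 1$ and $e^{-c|x-y|^2/t}\geq c'>0$ (because $|x|/2\leq|x-y|\leq 2|x|$ on the relevant region) then yields the claimed estimate.

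Second, I would choose the test family that saturates the logarithmic failure of Hölder's inequality hidden in the weight $|y|^{-\sigma}$. The point is that $|y|^{-\sigma}$ just misses $L^{p_0'}(B(0,1))$, since $p_0'=\frac{n}{\sigma}$ and hence $\sigma p_0'=n$. I take
\[
f_N(y)=|y|^{-(n-\sigma)}\mathcal X_{\{1/N<|y|<1\}}(y),\qquad N\in\mathbb N .
\]
A direct computation gives $\int_{\mathbb R^n}f_N(y)|y|^{-\sigma}\,dy=\int_{1/N<|y|<1}|y|^{-n}\,dy\sim \log N$, while $(n-\sigma)p_0=n$ gives $\|f_N\|_{L^{p_0}(\mathbb R^n)}^{p_0}=\int_{1/N<|y|<1}|y|^{-n}\,dy\sim\log N$, i.e. $\|f_N\|_{L^{p_0}(\mathbb R^n)}\sim(\log N)^{1/p_0}$.

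Combining the two steps, the lower bound reads $V_\rho(\{T_t^a\}_{t>0})(f_N)(x)\geq c\,(\log N)\,|x|^{-(n-\sigma)}$ for $|x|>2$. Because $|x|^{-(n-\sigma)}$ sits exactly at the borderline scaling of weak-$L^{p_0}$, its distribution function satisfies $|\{|x|>2:\,c\log N\,|x|^{-(n-\sigma)}>\lambda\}|\sim(\log N/\lambda)^{p_0}$ for $\lambda$ below a threshold, whence $\|V_\rho(\{T_t^a\}_{t>0})(f_N)\|_{L^{p_0,\infty}(\mathbb R^n)}\gtrsim \log N$. The quotient is therefore
\[
\frac{\|V_\rho(\{T_t^a\}_{t>0})(f_N)\|_{L^{p_0,\infty}(\mathbb R^n)}}{\|f_N\|_{L^{p_0}(\mathbb R^n)}}\gtrsim \frac{\log N}{(\log N)^{1/p_0}}=(\log N)^{\sigma/n}\xrightarrow[N\to\infty]{}\infty,
\]
which contradicts any weak-type bound and proves the proposition.

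The main obstacle I anticipate is the first step: producing a clean, rigorous lower bound for the variation operator. One must justify passing to the limit $t_2\downarrow 0$ inside the supremum defining $V_\rho$ (using the off-diagonal decay in \eqref{1.1} to annihilate $T^a_{t_2}(f)(x)$), and one must check that the constant $c'$ in the exponential factor is uniform over $|x|>2$ and over the supports $\{1/N<|y|<1\}$. The remainder is a matched pair of borderline integrability facts: the weight $|y|^{-\sigma}$ failing $L^{p_0'}$ logarithmically on one side, and $|x|^{-(n-\sigma)}$ being borderline weak-$L^{p_0}$ on the other, the mismatch between them producing the divergent factor $(\log N)^{\sigma/n}$.
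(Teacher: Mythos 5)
Your proposal is correct, and it reaches the conclusion by a route that differs from the paper's in two respects. First, where you lower-bound $V_\rho(\{T_t^a\}_{t>0})(f)(x)$ directly by a two-term chain $t_2<t_1=|x|^2$ and let $t_2\downarrow 0$, using the off-diagonal upper bound in \eqref{1.1} to kill $T^a_{t_2}(f)(x)$ for $x$ outside the support of $f$, the paper instead reduces the whole question to the maximal operator: it uses $|T_t^a(f)|\leq V_\rho(\{T_t^a\}_{t>0})(f)+|T_s^a(f)|$ together with the a.e.\ convergence $T_s^a(f)\to f$ (proved separately in the Appendix) to get $T_*^a(f)\leq V_\rho(\{T_t^a\}_{t>0})(f)+|f|$, and then disproves the weak-type bound for $T_*^a$. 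Your route avoids invoking the pointwise convergence result, which is a genuine simplification for this proposition; the paper's reduction has the side benefit of recording that $T_{*,0}^a$ itself fails the weak $(\frac{n}{n-\sigma},\frac{n}{n-\sigma})$ bound, a fact it reuses as a remark in Section 4. Second, both proofs rest on the same kernel lower bound $T^a_{|x|^2}(f)(x)\gtrsim |x|^{-(n-\sigma)}\int f(y)|y|^{-\sigma}dy$ and on the borderline failure $|y|^{-\sigma}\notin L^{n/\sigma}(B(0,1))$, but you make the test functions explicit ($f_N=|y|^{-(n-\sigma)}\mathcal X_{\{1/N<|y|<1\}}$, producing a $(\log N)^{\sigma/n}$ divergence measured on a fixed annulus $2<|x|<4$), whereas the paper takes abstract $f_k$ with $\|f_k\|_{L^{n/(n-\sigma)}}\leq 1$ and $\int f_k|y|^{-\sigma}\geq k$ and measures a thin annulus $1<|x|<(k/(k-1))^{1/(n-\sigma)}$. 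One small caveat: your intermediate lower bound is stated for every nonnegative $f$ supported in $B(0,1)$, but the limiting step $T^a_{t_2}(f)(x)\to 0$ requires $\int(1+\sqrt{t_2}/|y|)^\sigma f(y)\,dy<\infty$, which can fail for general such $f$; since your $f_N$ are supported in annuli away from the origin this is harmless here, but the lemma should be stated for that restricted class.
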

\begin{proof}
Let $f\in L^{\frac{n}{n-\sigma}}(\mathbb R^n)\cap L^2(\mathbb R^n)$. We have that
$$|T_t^a(f)|\leq V_\rho(\{T_t^a\}_{t>0})(f)+|T_s^a(f)|,\;\;\;\;0<s<t<\infty.$$
Since $f\in L^2(\mathbb R^n)$, $\displaystyle \lim_{s\rightarrow 0^+}T_s^a(f)(x)=f(x)$, for almost all $x\in\mathbb R^n$ (See Appendix). Then, it follows that $T_*^a(f)\leq V_\rho(\{T_t^a\}_{t>0})(f)+|f|$, where $T_*^a$ represents the maximal operator $\displaystyle T_*^a(f)=\sup_{t>0}|T_t^a(f)|$.

Hence, if $V_\rho(\{T_t^a\}_{t>0})$ is bounded from $L^{\frac{n}{n-\sigma}}(\mathbb R^n)$ into $L^{\frac{n}{n-\sigma},\infty}(\mathbb R^n)$, there exists $C>0$ such that, for every $f\in L^{\frac{n}{n-\sigma}}(\mathbb R^n)\cap L^2(\mathbb R^n)$,
\begin{align}\label{2.14}
|\{x\in\mathbb R^n:\;T_*^a(f)(x)>\lambda\}|\leq C\left(\frac{\|f\|_{L^{\frac{n}{n-\sigma}}(\mathbb{R}^n)}}{\lambda}\right)^{\frac{n}{n-\sigma}},\quad\lambda>0.
\end{align}

\noindent We are going to see that the last property for $T_*^a$ does not hold.

Assume that $f\in L^2(\mathbb R^n)$, $f\geq 0$. According to (\ref{1.1}) we have that
\begin{align*}
 T_t^a(f)(x) & \geq C\left(1+\frac{\sqrt{t}}{|x|}\right)^\sigma\int_{\mathbb R^n}\left(1+\frac{\sqrt{t}}{|y|}\right)^\sigma\frac{e^{-c\frac{|x-y|^2}{t}}}{t^{n/2}}f(y)dy \\
& \geq  C\left(1+\frac{\sqrt{t}}{|x|}\right)^\sigma\int_{B(0,|x|)}\left(1+\frac{\sqrt{t}}{|y|}\right)^\sigma\frac{e^{-c\frac{|x-y|^2}{t}}}{t^{n/2}}f(y)dy,\;\;\;\;x\in\mathbb R^n\;\mbox{and}\;t>0.
\end{align*}
Then,
$$
 T_*^a(f)(x) \geq  T_{|x|^2}^a(f)(x)  \geq C\int_{B(0,|x|)}\left(\frac{|x|}{|y|}\right)^\sigma\frac{e^{-c\frac{|x-y|^2}{|x|^2}}}{|x|^n}f(y)dy,\;\;\;\;x\in\mathbb R^n.
$$
Since $|x-y|\leq 2|x|$, $y\in B(0,|x|)$, it follows that
$$T_*^a(f)(x)  \geq C\frac{1}{|x|^{n-\sigma}}\int_{B(0,|x|)}\frac{f(y)}{|y|^\sigma}dy,\;\;\;\;x\in\mathbb R^n.$$

Taking into account that $\displaystyle\int_{B(0,1)}|y|^{-\sigma\cdot\frac{n}{\sigma}}dy=\infty$, we can find, for every $k\in\mathbb N$, $f_k\geq 0$, $f_k\in L^{\frac{n}{n-\sigma}}(B(0,1))\cap L^2(B(0,1))$ such that $\|f_k\|_{L^{\frac{n}{n-\sigma}}(B(0,1))}\leq 1$ and $\displaystyle\int_{B(0,1)}f_k(x)\frac{dx}{|x|^\sigma}\geq k$. We define $f_k(x)=0$, $|x|\geq 1$.

We have that
$$T_*^a(f_k)(x)  \geq \frac{C}{|x|^{n-\sigma}}\int_{B(0,1)}\frac{f_k(y)}{|y|^\sigma}dy\geq C\frac{k}{|x|^{n-\sigma}},\;\;\;\;|x|\geq 1\;\mbox{and}\;k\in\mathbb N.$$
Then,
$$T_*^a(f_k)(x)  \geq C(k-1),\;\;1<|x|<\left(\frac{k}{k-1}\right)^{\frac{1}{n-\sigma}},\;\;k\in\mathbb N,\;k\geq 2.$$
We deduce that, for every $k\in\mathbb N,\;k\geq 2$,
\begin{align*}
\Big|\big\{x\in\mathbb R^n: & \;T_*^a(f_k)(x)>C\frac{k-1}{2}\big\}\Big|\geq \int_{1<|x|<\left(\frac{k}{k-1}\right)^{\frac{1}{n-\sigma}}}dx  =|B(0,1)|\left(\left(\frac{k}{k-1}\right)^{\frac{n}{n-\sigma}}-1\right).
\end{align*}
Since $\|f_k\|_{L^{\frac{n}{n-\sigma}}(\mathbb R^n)}\leq 1$ and $\{k^{\frac{n}{n-\sigma}}-(k-1)^{\frac{n}{n-\sigma}}\}_{k\geq 2}$ is not a bounded sequence we conclude that (\ref{2.14}) does not hold.
\end{proof}

Now we are going to study weighted $L^p$-boundedness properties for the operator $V_\rho(\{t^\alpha\partial_t^\alpha T_t^a\}_{t>0})$. We will use \cite[Theorem 6.6]{BZ}.

By proceeding as above we get, for every $x\in\mathbb R^n$,
\begin{align}\label{2.15}
V_\rho(\{t^\alpha\partial_t^\alpha T_t^a\}_{t>0})(f)(x) & \leq C\left(\int_0^\infty t^{m-1}|\partial_t^m T_t^a(f)(x)|dt\right. 
\left. +\int_0^\infty t^{m}|\partial_t^{m+1} T_t^a(f)(x)|dt\right),
\end{align}
where $m\in\mathbb N$ and $m-1\leq\alpha <m$.

We take $k\in\mathbb N$ that will be fixed later. Let us define, for every $t>0$,
$$A_t^a=I-(I-T_t^a)^k.$$
For every ball $B$ in $\mathbb R^n$ and $j\in\mathbb N$, $j\geq 1$, we consider $S_j(B)=2^j B\setminus 2^{j-1}B$, and we write $S_0(B)=B$.

Let $B$ be a ball in $\mathbb R^n$. By $r_B$ we denote the radius of $B$. Assume that $1<p\leq q<\infty$ when $a\geq 0$ and that $\frac{n}{n-\sigma}<p\leq q<\frac{n}{\sigma}$ when $-\frac{(n-2)^2}{4}<a<0$. Let $f$ be a smooth function supported in $B$ and let $j\in\mathbb N$, $j\geq 2$.

According to (\ref{1.1}) and \cite[Theorem 3.1]{BADLL} we deduce that
\begin{align}\label{2.16}
 \left(\frac{1}{|S_j(B)|}\right. & \left.\int_{S_j(B)}|A_{r_B^2}^a(f)(x)|^qdx\right)^{1/q}  \leq C\sum_{i=1}^k\left(\frac{1}{|S_j(B)|}\int_{S_j(B)}|T_{r_B^2 i}^a(f)(x)|^qdx\right)^{1/q} \nonumber \\
& \leq C\frac{e^{-c2^{2j}}}{|S_j(B)|^{1/q}}r_B^{-n(\frac{1}{p}-\frac{1}{q})}\|f\|_{L^p(\mathbb R^n)} \nonumber \\
& \leq C\frac{e^{-c2^{2j}}}{(2^jr_B)^{n/q}}|B|^{(\frac{1}{q}-\frac{1}{p})}\|f\|_{L^p(\mathbb R^n)} \nonumber \\
& \leq C\frac{e^{-c2^{2j}}}{2^{jn/q}}\left(\frac{1}{|B|}\int_{B}|f(x)|^pdx\right)^{1/p}.
\end{align}
Let $\ell\in\mathbb N$, $\ell\geq 1$. We define the operator
$$S_{\ell ,a}(f)(x)=\int_0^\infty t^{\ell -1}|\partial_t^\ell T_t^a(f)(x)|dt,\;\;\;\;x\in\mathbb R^n.$$
By using Minkowski inequality we have that
\begin{align*}
 \left(\frac{1}{|S_j(B)|}\right. & \left.\int_{S_j(B)}|S_{\ell ,a}((I-A_{r_B^2}^a)(f))(x)|^qdx\right)^{1/q}  \\
& \leq \int_0^\infty\left(\frac{1}{|S_j(B)|}\int_{S_j(B)}|t^{\ell -1}\partial_t^\ell T_t^a((I-T_{r_B^2}^a)^k(f))(x)|^qdx\right)^{1/q}dt  \\
& \leq C\sum_{i=0}^k\int_0^{r_B^2}\left(\frac{1}{|S_j(B)|}\int_{S_j(B)}|t^{\ell -1}\partial_t^\ell T_{t+r_B^2i}^a(f)(x)|^qdx\right)^{1/q}dt \\
& + \int_{r_B^2}^\infty\left(\frac{1}{|S_j(B)|}\int_{S_j(B)}|t^{\ell -1}\partial_t^\ell T_t^a((I-T_{r_B^2}^a)^k(f))(x)|^qdx\right)^{1/q}dt.
\end{align*}
Let $i=0,1,...,k$. We consider
$$I_{j,i}=\int_0^{r_B^2}\left(\frac{1}{|S_j(B)|}\int_{S_j(B)}|t^{\ell -1}\partial_t^\ell T_{t+r_B^2i}^a(f)(x)|^qdx\right)^{1/q}dt.$$
By using (\ref{1.3}) and \cite[Theorem 3.1]{BADLL} we get
\begin{align*}
&  I_{j,i}=\int_0^{r_B^2}\left(\frac{1}{|S_j(B)|}\int_{S_j(B)}\left|\frac{t^{\ell -1}}{(t+r_B^2 i)^\ell}u^\ell\partial_u^\ell T_u^a(f)(x)_{|u=t+r_B^2 i}\right|^qdx\right)^{1/q}dt  \\
& \leq C\|f\|_{L^q(\mathbb R^n)}\int_0^{r_B^2}\frac{1}{t+r_B^2}\frac{1}{|S_j(B)|^{1/q}}e^{-c\frac{r_B^2 2^{2j}}{t+r_B^2}}dt  \\
& \leq C\|f\|_{L^q(\mathbb R^n)}\int_0^{r_B^2}\frac{1}{r_B^2 2^{2j}}\frac{1}{(2^j r_B)^{n/q}}\left(\frac{t+r_B^2}{r_B^2 2^{2j}}\right)^k dt  \\
& \leq C 2^{-2j(k+1+\frac{n}{2q})}\left(\frac{1}{|B|}\int_{B}|f(x)|^qdx\right)^{1/q}.
\end{align*}
By \cite[(18)]{BADLL}
$$(I-T_{r_B^2}^a)^k=\int_0^{r_B^2}\cdots\int_0^{r_B^2} (\partial_t^k T_t^a) _{|t=s_1+s_2+...+s_k}ds_1\cdots ds_k.$$
Using again (\ref{1.3}) and \cite[Theorem 3.1]{BADLL} we have that
\begin{align*}
&  \int_{r_B^2}^\infty\left(\frac{1}{|S_j(B)|}\int_{S_j(B)}|t^{\ell -1}\partial_t^\ell T_t^a((I-T_{r_B^2}^a)^k(f))(x)|^qdx\right)^{1/q}dt  \\
& \leq \int_{r_B^2}^\infty\int_0^{r_B^2}\cdots\int_0^{r_B^2}\left(\frac{1}{|S_j(B)|}\int_{S_j(B)}|t^{\ell -1}\partial_t^{\ell +k} T_{t+s_1+...+s_k}^a(f)(x)|^qdx\right)^{1/q}ds_1\cdots ds_k dt \\
& \leq C\|f\|_{L^q(\mathbb R^n)}\int_{r_B^2}^\infty\int_0^{r_B^2}\cdots\int_0^{r_B^2}\frac{1}{|S_j(B)|^{1/q}}\frac{t^{\ell -1}}{(t+s_1+...+s_k)^{\ell +k}}e^{-c\frac{r_B^2 2^{2j}}{t+s_1+...+s_k}} ds_1\cdots ds_k dt \\
& \leq C\int_{r_B^2}^\infty\int_0^{r_B^2}\cdots\int_0^{r_B^2}\frac{1}{2^{jn/q}}\frac{1}{t^{k+1}}e^{-c\frac{r_B^2 2^{2j}}{t}} ds_1\cdots ds_k dt\left(\frac{1}{|B|}\int_{B}|f(x)|^qdx\right)^{1/q} \\
& \leq C\frac{r_B^{2k}}{2^{jn/q}}\int_{r_B^2}^\infty\frac{1}{t^{k+1}}e^{-c\frac{r_B^2 2^{2j}}{t}}dt \left(\frac{1}{|B|}\int_{B}|f(x)|^qdx\right)^{1/q} \\
& \leq C 2^{-j(n/q+2k)}\left(\frac{1}{|B|}\int_{B}|f(x)|^qdx\right)^{1/q}.
\end{align*}
We conclude that
$$\left(\frac{1}{|S_j(B)|}\int_{S_j(B)}|S_{\ell ,a}((I-A_{r_B^2}^a)(f))(x)|^qdx\right)^{1/q}\leq C 2^{-j(n/q+2k)}\left(\frac{1}{|B|}\int_{B}|f(x)|^qdx\right)^{1/q}.$$
By using (\ref{2.15}) we deduce
\begin{align*}
\left(\frac{1}{|S_j(B)|}\int_{S_j(B)} \right. & \left. |V_\rho(\{t^\alpha\partial_t^\alpha T_t^a\}_{t>0})((I-A_{r_B^2}^a)(f))(x)|^qdx\right)^{1/q} \\
& \leq C 2^{-j(n/q+2k)}\left(\frac{1}{|B|}\int_{B}|f(x)|^qdx\right)^{1/q} .
\end{align*}
We choose $k\in\mathbb N$ such that $k>n/2$. Then, $\sum_{j\in \mathbb{N}} 2^{-j(n/q+2k)}2^{jn}<\infty$. As we have proved above, the operator $V_\rho(\{t^\alpha\partial_t^\alpha T_t^a\}_{t>0})$ is bounded from $L^p(\mathbb R^n)$ into itself, for every $1<p<\infty$, when $a\geq 0$, and for every $\frac{n}{n-\sigma}<p<\frac{n}{\sigma}$, when $-\frac{(n-2)^2}{4}<a<0$, and then from \cite[Theorem 6.6]{BZ} we deduce that $V_\rho(\{t^\alpha\partial_t^\alpha T_t^a\}_{t>0})$ is bounded from $L^p(\mathbb R^n,w)$ into itself provided that $1<p<\infty$ and $w\in A_p(\mathbb R^n)\cap RH_1(\mathbb R^n)$ when $a\geq 0$, and provided that $\frac{n}{n-\sigma}<p<\frac{n}{\sigma}$ and $w\in A_{(n-\sigma)p/n}(\mathbb R^n)\cap RH_{(\frac{n}{\sigma p})'}(\mathbb R^n)$, when $-\frac{(n-2)^2}{4}<a<0$.

It is known that (see \cite[p. 6712]{JSW})
$$\lambda[\Lambda(\{t^\alpha\partial_t^\alpha T_t^a\}_{t>0},\lambda)(f)(x)]^{1/rho}\leq C_qV_\rho(\{t^\alpha\partial_t^\alpha T_t^a\}_{t>0})(f)(x),\;\;\;x\in\mathbb R^n\:\mbox{and}\;\lambda >0.$$
From the $L^p$-boundedness properties of $V_\rho(\{t^\alpha\partial_t^\alpha T_t^a\}_{t>0})$ we deduce the corresponding ones for the family $\{\lambda(\Lambda[\{t^\alpha\partial_t^\alpha T_t^a\}_{t>0},\lambda]^{1/\rho}\}_{\lambda >0}$.

Finally, we have that
\begin{align*}
& \mathcal O(\{t^\alpha\partial_t^\alpha T_t^a\}_{t>0},\{t_j\}_{j\in\mathbb N})(f)(x) \\
& =\left(\sum_{j\in\mathbb{N}}\sup_{t_{j+1}\leq s_{j+1}<s_j\leq t_j}\big|t^\alpha\partial_t^\alpha T_t^a(f)(x)_{|t=s_j}-t^\alpha\partial_t^\alpha T_t^a(f)(x)_{|t=s_{j+1}}\big|^2\right)^{1/2} \\
& \leq \left(\sum_{j\in \mathbb{N}}\sup_{t_{j+1}\leq s_{j+1}<s_j\leq t_j}\left|\int_{s_{j+1}}^{s_j}\partial_t(t^\alpha\partial_t^\alpha T_t^a(f)(x))dt\right|^2\right)^{1/2} \\
& \leq \int_0^\infty|\partial_t(t^\alpha\partial_t^\alpha T_t^a(f)(x))|dt,\;\;\;\;x\in\mathbb R^n,
\end{align*}
and in a similar way we can get, for every $k\in\mathbb N$,
$$V_{k}(\{t^\alpha\partial_t^\alpha T_t^a\}_{t>0})(f)(x)\leq \int_{2^{-k}}^{2^{-k+1}}|\partial_t(t^\alpha\partial_t^\alpha T_t^a(f)(x))|dt,\;\;\;\;x\in\mathbb R^n,$$
and then
$$S_V(\{t^\alpha\partial_t^\alpha T_t^a\}_{t>0})(f)(x)\leq \int_0^\infty|\partial_t(t^\alpha\partial_t^\alpha T_t^a(f)(x))|dt,\;\;\;\;x\in\mathbb R^n.$$

By proceeding as in the proof of the $L^p$-boundedness properties for the variation operator $V_{\rho}(\{t^\alpha\partial_t^\alpha T_t^a\}_{t>0})$ we can get the corresponding ones for the oscilation $\mathcal O(\{t^\alpha\partial_t^\alpha T_t^a\}_{t>0},\{t_j\}_{j\in\mathbb N})$ and the short variation $S_V(\{t^\alpha\partial_t^\alpha T_t^a\}_{t>0})$ operators.

\section{Proof of Theorem \ref{Th1.2}}
We are going to see that the variation operator $V_\rho (\{t^\alpha \partial _t^\alpha T_t^a\}_{t>0})$ is bounded from $H^1(\mathcal{L}_a)$ into $L^1(\mathbb{R}^n)$. According to Theorem \ref{Th1.1} $V_\rho(\{t^\alpha \partial _t^\alpha T_t^a\}_{t>0})$ is bounded from $L^2(\mathbb{R}^n)$ into itself. Then, by \cite[Lemma 4.3]{HLMMY}, in order to get our objective it is sufficient to see that, by taking $M>1$, there exists $C>0$ such that for every $(2,M)$-atom $b$ we have that
\begin{equation}\label{3.0}
\|V_\rho (\{t^\alpha \partial _t^\alpha T_t^a\}_{t>0})(b)\|_{L^1(\mathbb{R}^n)}\leq C.
\end{equation}
Assume that $b$ is a $(2,M)$-atom associated with the ball $B$. Since $V_\rho (\{t^\alpha \partial _t^\alpha T_t^a\}_{t>0})$ is bounded on $L^2(\mathbb{R}^n)$ it follows that
\begin{align} \label{3.1}
    \|V_\rho (\{t^\alpha \partial _t^\alpha T_t^a\}_{t>0})(b)\|_{L^1(4B)} & \leq |4B|^{1/2}\|V_\rho (\{t^\alpha \partial _t^\alpha T_t^a\}_{t>0})(b)\|_{L^2(\mathbb{R}^n)} \nonumber \\
    & \leq C|B|^{1/2}\|b\|_{L^2(\mathbb{R}^n)}\leq C,
\end{align}
where $C>0$ does not depend on $b$.

By proceeding as in the proof of Theorem \ref{Th1.1} we can see that
\begin{align*}
   V_\rho (\{t^\alpha \partial _t^\alpha T_t^a\}_{t>0})(b)(x) & \leq \int_0^\infty |\partial _t(t^\alpha \partial _t^\alpha T_t^a(b)(x)|dt \\
   &\hspace{-1cm}\leq C\left(\mathcal{X}_{\{\gamma >0\}}(\alpha )\int_0^\infty |t^{\alpha -1}\partial _t^\alpha T_t^a(b)(x)|dt +\int_0^\infty |t^\alpha \partial _t^{\alpha +1} T_t^a(b)(x)|dt\right),\quad x\in \mathbb{R}^n.
\end{align*}

If $\beta >0$ and $k\in \mathbb{N}$ such that $k-1\leq \beta <k$, we have, as in \eqref{2.0}, that
$$
\int_0^\infty |t^{\beta -1}\partial _t^\beta T_t^a(b)(x)|dt\leq C\int_0^\infty |t^{k-1}\partial _t^kT_t^a(b)(x)|dt.
$$

Consider $k\in \mathbb{N}$, $k\geq 1$. We define the operator $S_{k,a}$ as follows
$$
S_{k,a}(f)(x)=\int_0^\infty |t^{k-1}\partial _t^k T_t^a(f)(x)|dt,\quad x\in \mathbb{R}^n.
$$

We are going to see that
$$
\|S_{k,a}(b)\|_{L^1(\mathbb{R}^n\setminus (4B))}\leq C,
$$
where $C$ does not depend on $b$.

There exists $u \in D(\mathcal{L}_a)$ such that $b=\mathcal{L}_au$, $\supp u\subset B$ and $\|u\|_{L^2(\mathbb{R}^n)}\leq r_B^2|B|^{-1/2}$. We have that $T_t^ab=T_t^a\mathcal{L}_au=\mathcal{L}_aT_t^au=\partial _tT_t^au$, $t>0$. From \eqref{1.3} and since $|x-y|\geq c|x-x_B|$, when $x\not \in 4B$ and $y\in B$, it follows that
\begin{align*}
    S_{k,a}(b)(x)&\leq C\int_0^\infty t^{k-1}|\partial _t^{k+1}T_t^a(u)(x)|dt\leq C\int_0^\infty\int_B\frac{e^{-c\frac{|x-y|^2}{t}}}{t^{\frac{n}{2}+2}}|u(y)|dydt\\
    &\leq C\int_0^\infty\int_B\frac{e^{-c\frac{|x-x_B|^2}{t}}}{t^{\frac{n}{2}+2}}|u(y)|dydt
    \leq C\frac{\|u\|_{L^1(\mathbb{R}^n)}}{|x-x_B|^{n+2}}\\
    &\leq C\frac{\|u\|_{L^2(\mathbb{R}^n)}|B|^{1/2}}{|x-x_B|^{n+2}}\leq C\frac{r_B^2}{|x-x_B|^{n+2}},\quad x\not \in 4B.
\end{align*}
Then,
\begin{equation}\label{3.5bis}
\|S_{k,a}(b)\|_{L^1(\mathbb{R}^n\setminus (4B))}\leq Cr_B^2 \int_{4r_B}^\infty \frac{d\rho }{\rho^3}\leq C,
\end{equation}
where $C$ does not depend on $b$.

By using \eqref{3.1} and \eqref{3.5bis} for $k=1$ when $\alpha =0$ and with $k=m$ and $k=m+1$, when $\alpha >0$ and $m\in \mathbb{N}$ such that $m-1\leq \alpha <m$, it follows that
$$
\|V_\rho (\{t^\alpha \partial_t^\alpha T_t^a\}_{t>0})(b)\|_{L^1(\mathbb{R}^n)}\leq C,
$$
being $C$ independent of $b$.

In order to see that the oscillations, jump and short variation operators are bounded from $H^1(\mathcal{L}_a)$ into $L^1(\mathbb{R}^n)$ we can proceed as in the end of the proof of Theorem \ref{Th1.1}.

For every $t>0$ we have that $|T_t^af|\leq |T_t^af-T_1^af|+|T_1^af|$. Then,
$$
|T_t^af|\leq V_\rho (\{T_t^a\}_{t>0})(f)+|T_1^af|.
$$
Since $a \geq 0$, from \eqref{1.1} we deduce that there exists $C>0$ such that
$$
\|T_t^af\|_{L^1(\mathbb{R}^n)}\leq C\|f\|_{L^1(\mathbb{R}^n)},\quad t>0\mbox{ and }f\in L^1(\mathbb{R}^n).
$$
Hence, if $f\in L^1(\mathbb{R}^n)$ and $V_\rho (\{T_t^a\}_{t>0})(f)\in L^1(\mathbb{R}^n)$, then $T_*^a(f)\in L^1(\mathbb{R}^n)$, and, thus, $f\in H^1(\mathcal{L}_a)$.

We conclude that if $f\in L^1(\mathbb{R}^n)$, then $f\in H^1(\mathcal{L}_a)$ if, and only if, $V_\rho (\{T_t^a\}_{t>0})(f)\in L^1(\mathbb{R}^n)$. Furthermore,
$$
\|f\|_{L^1(\mathbb{R}^n)}+\|T_*^af\|_{L^1(\mathbb{R}^n)}\sim \|f\|_{L^1(\mathbb{R}^n)}+\|V_\rho (\{T_t^a\}_{t>0})(f)\|_{L^1(\mathbb{R}^n)}.
$$

\section{Proof of Theorem \ref{Th1.3}}
We firstly establish the unweighted $L^p$-inequalities for $T_{*,\alpha}^a$. As it was mentioned in the introduction, by using \eqref{1.1} we can see that $T_{*,0}^a$ is bounded from $L^p(\mathbb{R}^n)$ into itself, for every $1<p<\infty$, and from $L^1(\mathbb{R}^n)$ into $L^{1,\infty}(\mathbb{R}^n)$, provided that $\sigma<0$, that is, $a\geq 0$.

Let $\alpha >0$. We choose $m\in \mathbb{N}$ such that $m-1\leq \alpha <m$. We have that
$$
\partial_t^\alpha T_t^a(f)(x)=\frac{1}{\Gamma (m-\alpha)}\int_0^\infty \partial _u^mT_u^a(f)(x)_{|u=t+s}s^{m-\alpha -1}ds,\quad x\in \mathbb{R}^n\mbox{ and }t>0.
$$
Then,
$$
|t^\alpha \partial_t^\alpha T_t^a(f)(x)|\leq Ct^\alpha T_{*,m}^a(f)(x)\int_0^\infty (t+s)^{-m}s^{m-\alpha -1}ds\leq CT_{*,m}^a(f)(x),\quad x\in \mathbb{R}^n,\;t>0.
$$
Hence, we get
$$
T_{*,\alpha }^a(f)(x)\leq CT_{*,m}^a(f)(x),\quad x\in \mathbb{R}^n.
$$

We consider the following maximal operator
$$
\mathcal{S}_{*,m}^a(f)(x)=\sup_{t>0}|t^m\partial_t^m(T_t^a-W_t)(f)(x)|,\quad x\in \mathbb{R}^n.
$$

We have that $\mathcal{S}_{*,m}^a(f)\leq \sum_{j=1}^4\mathcal{H}_j(f)$, where, for every $j=1,2,3,4$,
$$
\mathcal{H}_j(f)(x)=\int_{\mathbb{R}^n}H_j(x,y)f(y)dy,\quad x\in \mathbb{R}^n,
$$
being
\begin{align*}
    H_1(x,y)&=\mathcal{X}_{A_1}(x,y)\sup_{t>0}|t^m\partial _t^m(T_t^a(x,y)-W_t(x-y))|,\quad x,y\in \mathbb{R}^n,\\
    H_2(x,y)&=\mathcal{X}_{A_2}(x,y)\sup_{t>0}|t^m\partial _t^m(T_t^a(x,y)-W_t(x-y))|,\quad x,y\in \mathbb{R}^n,\\
    H_3(x,y)&=\mathcal{X}_{A_3}(x,y)\sup_{t\geq |x|^2}|t^m\partial _t^m(T_t^a(x,y)-W_t(x-y))|,\quad x,y\in \mathbb{R}^n,
\end{align*}
and
$$
H_4(x,y)=\mathcal{X}_{A_3}(x,y)\sup_{0<t\leq |x|^2}|t^m\partial _t^m(T_t^a(x,y)-W_t(x-y))|,\quad x,y\in \mathbb{R}^n.
$$
Here, as in the proof of Theorem \ref{Th1.1}, $A_1=\{(x,y)\in\mathbb R^n:\;0<|y|<\frac{|x|}{2}\}$, $A_2=\{(x,y)\in\mathbb R^n:\;|y|>\frac{3|x|}{2}>0\}$ and $A_3=\{(x,y)\in\mathbb R^n:\;0<\frac{|x|}{2}\leq |y|\leq \frac{3|x|}{2}\}$.

We estimate $H_j$, $j=1,2,3,4$.

(I) We have that
\begin{align*}
H_1(x,y)&\leq\mathcal{X}_{A_1}(x,y)\sup_{t>0}|t^m\partial _t^m T_t^a(x,y)|+\mathcal{X}_{A_1}(x,y)\sup_{t>0}|t^m\partial _t^m W_t(x-y)|\\
&=H_{1,1}(x,y)+H_{1,2}(x,y),\quad x,y\in \mathbb{R}^n.
\end{align*}
According to \eqref{2.1} we get
$$
H_{1,2}(x,y)\leq C\sup_{t>0}\frac{e^{-c\frac{|x-y|^2}{t}}}{t^{n/2}}\leq \frac{C}{|x-y|^n}\leq \frac{C}{|x|^n}, \quad (x,y)\in A_1.
$$
By using \eqref{1.3} we deduce
$$
H_{1,1}(x,y)\leq C\sup_{t>0}\left(1+\frac{\sqrt{t}}{|x|}\right)^\sigma \left(1+\frac{\sqrt{t}}{|y|}\right)^\sigma \frac{e^{-c\frac{|x-y|^2}{t}}}{t^{n/2}},\quad (x,y)\in A_1. 
$$
If $\sigma \leq 0$ then
\begin{equation}\label{4.1}
H_{1,1}(x,y)\leq \frac{C}{|x|^n},\quad (x,y)\in A_1.
\end{equation}
When $0<\sigma <\frac{n-2}{2}$ we can write
\begin{align*}
    H_{1,1}(x,y)&\leq C\sup_{t>0}\left(1+\frac{t^{\sigma /2}}{|x|^\sigma}+\frac{t^{\sigma /2}}{|y|^\sigma}+\frac{t^\sigma }{|x|^\sigma|y|^\sigma}\right)\frac{e^{-c\frac{|x|^2}{t}}}{t^{n/2}}\\
    &\leq C\left(\frac{1}{|x|^n}+\frac{1}{|x|^{n-\sigma}|y|^\sigma}\right),\quad (x,y)\in A_1.
\end{align*}

(II) In a similar way we can deduce that
$$
H_2(x,y)\leq C\left\{
\begin{array}{ll}
\displaystyle \frac{1}{|y|^n},&\sigma \leq 0,\\[0.5cm]
\displaystyle \frac{1}{|y|^n}+\frac{1}{|y|^{n-\sigma}|x|^\sigma },&\displaystyle 0<\sigma <\frac{n-2}{2}
\end{array}
\right. \quad  ,\; (x,y)\in A_2.
$$

(III) By using \eqref{1.3} we get
$$
H_3(x,y)\leq C\sup_{t\geq |x|^2}\left(1+\frac{\sqrt{t}}{|x|}\right)^\sigma \left(1+\frac{\sqrt{t}}{|y|}\right)^\sigma \frac{1}{t^{n/2}}\leq \frac{C}{|x|^n},\quad (x,y)\in A_3.
$$

(IV) By proceeding as in the part (IV) in the proof of Theorem \ref{Th1.1} we obtain
$$
H_4(x,y)\leq C\sup_{t\leq |x|^2}\frac{e^{-c\frac{|x-y|^2}{t}}}{|x|^2t^{n/2-1}}\leq \frac{C}{|x|^2|x-y|^{n-2}},\quad (x,y)\in A_3.
$$
The arguments in the proof of Theorem \ref{Th1.1} allow us to conclude that $\mathcal{S}_{*,m}^a$ is bounded from $L^p(\mathbb{R}^n)$ into itself, for every $1<p<\infty$, and from $L^1(\mathbb{R}^n)$ into $L^{1,\infty}(\mathbb{R}^n)$, when $a\geq 0$, and from $L^p(\mathbb{R}^n)$ into itself, for every $\frac{n}{n-\sigma}<p<\frac{n}{\sigma}$, when $-\frac{(n-2)^2}{4}<a<0$.

According to \eqref{2.1} the maximal operator $W_{*,m}$ defined by
$$
W_{*,m}(f)=\sup_{t>0}|t^m\partial _t^mW_t(f)|,
$$
is bounded from $L^p(\mathbb{R}^n)$ into itself, for every $1<p<\infty$, and from $L^1(\mathbb{R}^n)$ into $L^{1,\infty }(\mathbb{R}^n)$.

Then, we conclude that the operator $T_{*,m}^a$, and hence $T_{*,\alpha }^a$, is bounded from $L^p(\mathbb{R}^n)$ into itself, for every $1<p<\infty$, and from $L^1(\mathbb{R}^n)$ into $L^{1,\infty }(\mathbb{R}^n)$, when $a\geq 0$, and from $L^p(\mathbb{R}^n)$ into itself, for every $\frac{n}{n-\sigma}<p<\frac{n}{\sigma}$, when $-\frac{(n-2)^2}{4}<a<0$.

\begin{rem}
In the proof of Proposition \ref{Prop2.2} we establish that the operator $T_{*,0}^a$ is not bounded from $L^{n/(n-\sigma)}(\mathbb{R}^n)$ into $L^{n/(n-\sigma),\infty}(\mathbb{R}^n)$.
\end{rem}

We are going to prove the weighted $L^p$-inequalities for $T_{*,\alpha}^a$. As we saw above we have that
$T_{*,\alpha}^a(f)\leq CT_{*,m}^a$, where $m\in \mathbb{N}$ is such that $m-1\leq \alpha <m$.

We use \cite[Theorem 6.6]{BZ}. We consider, as in the proof of Theorem \ref{Th1.1}, for every $t>0$, $A_t^a=I-(I-T_t^a)^k$, where $k\in \mathbb{N}$ and $k>n/2$.

Let $B$ be a ball in $\mathbb{R}^n$. We recall that, for every $j\in \mathbb{N}$, $j\geq 1$, $S_j(B)=2^jB\setminus2^{j-1}B$, and that by $r_B$ we represent the radius of $B$.

Assume that $1<p<q<\infty$, when $a\geq 0$ and that $\frac{n}{n-\sigma}<p<q<\frac{n}{\sigma}$, when $-\frac{(n-2)^2}{4}<a<0$. Let $f$ be a smooth function supported in $B$ and $j\in \mathbb{N}$, $j\geq 2$. By \eqref{2.16} we have that 
$$
\left(\frac{1}{|S_j(B)|}\int_{S_j(B)}|A_{r_B^2}^a(f)(x)|^qdx\right)^{1/q}\leq C\frac{e^{-c2^{2j}}}{2^{jn/q}}\left(\frac{1}{|B|}\int_B|f(x)|^pdx\right)^{1/p}.
$$
On the other hand, by \eqref{1.1} we get
\begin{align*}
    |t^m\partial _t^mT_t^a(f)(x)|&\leq C\int_B\Big(1+\frac{\sqrt{t}}{|x|}\Big)^\sigma \Big(1+\frac{\sqrt{t}}{|y|}\Big)^\sigma \frac{e^{-c\frac{|x-y|^2}{t}}}{t^{n/2}}|f(y)|dy\\
    &\leq C\left\{
    \begin{array}{ll}
    t^{-n/2},&\mbox{ when }a\geq 0\\[0.4cm]
    t^{-n/2+\sigma},&\mbox{ when }\displaystyle -\frac{(n-2)^2}{4}<a<0
    \end{array}\right.,\quad t\geq 1, \;x\in \mathbb{R}^n\setminus\{0\}.
\end{align*}
Here $C>0$ depends on $x\in \mathbb{R}^n\setminus\{0\}$. Then, 
$$
\lim_{t\rightarrow \infty}t^m\partial _t^mT_t^a(f)(x)=0,\quad x\in \mathbb{R}^n\setminus\{0\}.
$$
We can write 
$$
t^m\partial_t^mT_t^a(f)(x)=-\int_t^\infty \partial _u(u^m\partial _u^mT_u^a(f)(x))du,\quad t>0,\;x\in \mathbb{R}^n\setminus\{0\}.
$$
We obtain
$$
T_{*,m}^a(f)(x)\leq \int_0^\infty |\partial _u(u^m\partial _u^mT_u^a(f)(x))|du,\quad x\in \mathbb{R}^n\setminus\{0\}.
$$

The properties established in the proof of Theorem \ref{Th1.1} for the operator $S_{\ell ,a}$, $\ell \in \mathbb{N}$, lead to
$$
\left(\frac{1}{|S_j(B)|}\int_{S_j(B)}|T_{*,m}^a((I-A_{r_B^2}^a)(f))(x)|^qdx\right)^{1/q}\leq C2^{-j(n/q+2k)}\left(\frac{1}{|B|}\int_B|f(x)|^qdx\right)^{1/q}.
$$
The proof can be finished by using \cite[Theorem 6.6]{BZ}.

\section {Appendix}
In this appendix we prove the following continuity result.
\begin{ThA}
For every $f\in L^p(\mathbb{R}^n,w)$ we have that
$$
\lim_{t\rightarrow 0^+}T_t^a(f)(x)=f(x),\quad \mbox{for almost all }x\in \mathbb{R}^n\setminus\{0\},
$$
provided that one the three following conditions holds:

(i) $a\geq 0$ and $p=1$;

(ii) $a\geq 0$, $1<p<\infty$ and $w\in A_p(\mathbb{R}^n)\cap RH_1(\mathbb{R}^n)$;

(iii) $-\frac{(n-2)^2}{4}< a<0$, $\frac{n}{n-\sigma}<p<\frac{n}{\sigma}$ and $w\in A_{(n-\sigma)p/n}(\mathbb{R}^n)\cap RH_{(\frac{n\sigma}{p})'}(\mathbb{R}^n)$.
\end{ThA}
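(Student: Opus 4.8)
The plan is to combine the weighted maximal estimates already obtained in Theorem \ref{Th1.3} with an almost everywhere convergence statement on a dense subclass of $L^p(\mathbb{R}^n,w)$, and then run the classical maximal function transference argument. Observe first that, taking $\alpha=0$, the maximal operator $T_*^a=T_{*,0}^a$ is bounded from $L^p(\mathbb{R}^n,w)$ into itself under hypotheses (ii) and (iii), and from $L^1(\mathbb{R}^n)$ into $L^{1,\infty}(\mathbb{R}^n)$ under (i), by Theorem \ref{Th1.3}. Note also that in each case $w\in A_p(\mathbb{R}^n)$ (for (iii), because $(n-\sigma)p/n<p$), so that $C_c^\infty(\mathbb{R}^n\setminus\{0\})$ is dense in $L^p(\mathbb{R}^n,w)$: since $w\in A_\infty(\mathbb{R}^n)$ forces $w(\{0\})=0$, one removes a neighbourhood of the origin by a smooth cut-off with negligible $L^p(w)$-cost.

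Next I would prove the pointwise limit on this dense class. Let $g\in C_c^\infty(\mathbb{R}^n\setminus\{0\})$. Then $g\in D(\mathcal{L}_a)$ and $h:=\mathcal{L}_ag=-\Delta g+a|x|^{-2}g$ again belongs to $C_c^\infty(\mathbb{R}^n\setminus\{0\})$, hence is bounded with support away from the origin. Using that the semigroup commutes with its generator, for every $x\in\mathbb{R}^n\setminus\{0\}$ one has
\begin{equation*}
T_t^a g(x)-g(x)=\int_0^t\partial_sT_s^a g(x)\,ds=-\int_0^t T_s^a(\mathcal{L}_a g)(x)\,ds,
\end{equation*}
an identity valid in $L^2(\mathbb{R}^n)$ and, via the kernel representation together with the bound \eqref{1.1}, also pointwise for $x\neq0$. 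It therefore suffices to bound $T_s^a h(x)$ uniformly for $s\in(0,1)$ and $x$ in a fixed compact subset $K$ of $\mathbb{R}^n\setminus\{0\}$. For such $x$ and for $y\in\supp h$, both factors $(1+\sqrt{s}/|x|)^\sigma$ and $(1+\sqrt{s}/|y|)^\sigma$ in \eqref{1.1} stay bounded, so
\begin{equation*}
|T_s^a h(x)|\leq C\|h\|_\infty\int_{\mathbb{R}^n}s^{-n/2}e^{-c|x-y|^2/s}\,dy\leq C\|h\|_\infty,\quad s\in(0,1),\ x\in K.
\end{equation*}
Hence $|T_t^a g(x)-g(x)|\leq Ct\|h\|_\infty\to0$ as $t\to0^+$, for every $x\in\mathbb{R}^n\setminus\{0\}$.

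Finally I would transfer the convergence. Given $f\in L^p(\mathbb{R}^n,w)$ and $\varepsilon>0$, choose $g$ in the dense class with $\|f-g\|_{L^p(\mathbb{R}^n,w)}<\varepsilon$. From
\begin{equation*}
\limsup_{t\to0^+}|T_t^a f(x)-f(x)|\leq T_*^a(f-g)(x)+\limsup_{t\to0^+}|T_t^a g(x)-g(x)|+|(f-g)(x)|
\end{equation*}
and the vanishing of the middle term, the set where the left-hand side exceeds $\lambda$ is contained in $\{T_*^a(f-g)>\lambda/2\}\cup\{|f-g|>\lambda/2\}$. Estimating the first set by the maximal bounds of Theorem \ref{Th1.3} and the second by Chebyshev's inequality, its $w$-measure is at most $C(\varepsilon/\lambda)^p$ when $1<p<\infty$, and $C\varepsilon/\lambda$ when $p=1$. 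Letting $\varepsilon\to0$ gives that $\limsup_{t\to0^+}|T_t^a f-f|=0$ almost everywhere, which is the claim.

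The main obstacle is the dense-class step, and the delicate point there is exactly the non-conservativity of $\{T_t^a\}_{t>0}$ recorded in Section 2: because $T_t^a1\neq1$, one cannot write $T_t^a g(x)-g(x)$ as a weighted average of the increments $g(y)-g(x)$ and let the kernel act as a genuine approximate identity. The generator identity circumvents this, at the price of restricting to $x\neq0$: when $\sigma>0$ the uniform control of $T_s^a h$ breaks down as $x\to0$ through the factor $(1+\sqrt{s}/|x|)^\sigma$, which is the structural reason the statement is confined to almost every $x\in\mathbb{R}^n\setminus\{0\}$.
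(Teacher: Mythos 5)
Your proposal is correct, and the outer structure — reduce to a dense class via the maximal bounds of Theorem \ref{Th1.3} and a standard transference — is exactly the paper's. The difference lies in how the dense-class convergence is proved. The paper takes $g\in C_c^\infty(\mathbb{R}^n)$ and splits $T_t^ag-g$ into $\int T_t^a(x,y)(g(y)-g(x))\,dy$ plus the non-conservativity correction $g(x)\int(T_t^a(x,y)-W_t(x-y))\,dy$; the first piece is handled as for an approximate identity using \eqref{1.1}, and the second is controlled by estimating $T_t^a(x,y)-W_t(x-y)$ through Duhamel's formula, which costs a page of kernel estimates split over the regions $A_1$, $A_2$, $A_3$. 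You instead restrict to $g\in C_c^\infty(\mathbb{R}^n\setminus\{0\})$, note that $g\in D(\mathcal{L}_a)$ with $\mathcal{L}_ag=L_ag\in C_c^\infty(\mathbb{R}^n\setminus\{0\})$, and use the generator identity $T_t^ag-g=-\int_0^tT_s^a(\mathcal{L}_ag)\,ds$ together with the crude bound $\sup_{0<s<1}|T_s^a(\mathcal{L}_ag)(x)|\leq C_K$ on compact subsets of $\mathbb{R}^n\setminus\{0\}$. This is shorter and avoids Duhamel entirely, at the price of (a) a slightly more delicate density statement, which you handle correctly since $w(\{0\})=0$, and (b) upgrading the $L^2$-identity to a pointwise one. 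On (b) you overclaim slightly: since no spatial continuity of $T_t^a(x,y)$ is available (the paper stresses this), you cannot assert the identity for \emph{every} $x\neq0$; but agreement for a.e.\ $x$ with a null set independent of $t$ — obtained from a.e.\ agreement at each rational $t$ plus continuity in $t$ of both sides, which follows from \eqref{1.3} and dominated convergence — is all that the almost-everywhere statement requires. With that adjustment your argument is complete, and your closing remark correctly identifies why the restriction to $x\neq0$ is structural when $\sigma>0$.
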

\begin{proof}
According to $L^p$-boundedness properties for the maximal operator $T_{*,0}^a$ established in Theorem \ref{1.3}, by using a standard procedure, in order to prove the result it is sufficient to see that, for every $f\in C_c^\infty (\mathbb{R}^n)$,
$$
\lim_{t\rightarrow 0^+}T_t^a(f)(x)=f(x),\quad x\in \mathbb{R}^n\setminus\{0\}.
$$
Let $f\in C_c^\infty(\mathbb{R}^n)$. Since $\int_{\mathbb{R}^n}W_t(x-y)dy=1$, $x\in \mathbb{R}^n$ and $t>0$, we can write
\begin{align*}
T_t^a(f)(x)-f(x)&=\int_{\mathbb{R}^n}T_t^a(x,y)(f(y)-f(x))dy+f(x)\int_{\mathbb{R}^n}(T_t^a(x,y)-W_t(x-y))dy\\
&=H_1(t,x)+H_2(t,x),\quad x\in \mathbb{R}^n\setminus\{0\}\mbox{ and }t>0.
\end{align*}
Let $x\in \mathbb{R}^n\setminus\{0\}$ and $\varepsilon >0$. According to \eqref{1.1} we have that
$$
|H_1(t,x)|\leq C\int_{\mathbb{R}^n}\Big(1+\frac{\sqrt{t}}{|x|}\Big)^\sigma \Big(1+\frac{\sqrt{t}}{|y|}\Big)^\sigma\frac{e^{-c\frac{|x-y|^2}{t}}}{t^{n/2}}|f(y)-f(x)|dy,\quad t>0.
$$
We choose $0<\delta <|x|/2$ such that $|f(y)-f(x)|<\varepsilon$ when $|x-y|<\delta$. Then,
\begin{align*}
|H_1(t,x)|&\leq C\left(\int_{\mathbb{R}^n\setminus B(x,\delta)}\Big(1+\frac{\sqrt{t}}{|x|}\Big)^\sigma \Big(1+\frac{\sqrt{t}}{|y|}\Big)^\sigma\frac{e^{-c\frac{|x-y|^2}{t}}}{t^{n/2}}dy\right.\\
&\quad \left.+\varepsilon \int_{B(x,\delta)}\Big(1+\frac{\sqrt{t}}{|x|}\Big)^\sigma \Big(1+\frac{\sqrt{t}}{|y|}\Big)^\sigma\frac{e^{-c\frac{|x-y|^2}{t}}}{t^{n/2}}dy\right)\\
&=H_{1,1}(t,x)+H_{1,2}(t,x),\quad t>0.
\end{align*}
Suppose that $a\geq 0$. Then, $\sigma \leq 0$. We get
$$
H_{1,1}(t,x)\leq C\int_{\mathbb{R}^n\setminus B(x,\delta)}\frac{e^{-c\frac{|x-y|^2}{t}}}{t^{n/2}}dy=C\int_{\mathbb{R}^n\setminus B(0,\delta)}\frac{e^{-c\frac{|z|^2}{t}}}{t^{n/2}}dz=C\int_{\mathbb{R}^n\setminus B(0,\frac{\delta}{\sqrt{t}})}e^{-c|u|^2}du.
$$
There exists $t_1>0$ such that $H_{1,1}(t,x)<\varepsilon$, $t\in (0,t_1)$. Hence $|H_1(t,x)|\leq C\varepsilon $, $t\in (0,t_1)$.

Assume now that $-\frac{(n-2)^2}{4}<a<0$. Then $0<\sigma <\frac{n-2}{2}$. We obtain
\begin{align*}
    H_{1,1}(t,x)&\leq C\int_{\mathbb{R}^n\setminus B(x,\delta)}\Big(1+\frac{\sqrt{t}}{|x|}\Big)^\sigma \Big(1+\frac{\sqrt{t}}{|y|}\Big)^\sigma\frac{e^{-c\frac{|x-y|^2}{t}}}{t^{n/2}}dy\\
    &\leq C\left(\int_{[\mathbb{R}^n\setminus B(x,\delta)]\cap B(0,1)}+\int_{[\mathbb{R}^n\setminus B(x,\delta)]\cap B(0,1)^c}\right)\Big(1+\frac{\sqrt{t}}{|y|}\Big)^\sigma\frac{e^{-c\frac{|x-y|^2}{t}}}{t^{n/2}}dy\\
    &\leq C\left(\frac{e^{-\frac{\delta^2}{t}}}{t^{n/2}}(1+t^{\sigma /2})+(1+\sqrt{t})^\sigma \int_{\mathbb{R}^n\setminus B(0,\frac{\delta}{\sqrt{t}})}e^{-c|u|^2}du\right).
\end{align*}
There exists $t_2>0$ such that $H_{1,1}(t,x)<\varepsilon$, $t\in (0,t_2)$.

On the other hand, since $0<\delta<|x|/2$, we can write
$$
H_{1,2}(t,x)\leq C\varepsilon \int_{B(x,\delta)}\Big(1+\frac{\sqrt{t}}{|y|}\Big)^\sigma \frac{e^{-c\frac{|x-y|^2}{t}}}{t^{n/2}}dy\leq C\varepsilon (1+\sqrt{t})^\sigma\int_{\mathbb{R}^n}\frac{e^{-c\frac{|x-y|^2}{t}}}{t^{n/2}}dy\leq C\varepsilon ,\quad t>0.
$$
Hence, $|H_1(t,x)|\leq C\varepsilon$, $t\in (0,t_2)$.

We have that
\begin{align*}
    |H_2(t,x)|&\leq C\int_{\mathbb{R}^n}|T_t^a(x,y)-W_t(x-y)|dy\leq C\sum_{j=1}^3\int_{\mathbb{R}^n}|T_t^a(x,y)-W_t(x-y)|\mathcal{X}_{A_j}(y)dy\\
    &=\sum_{j=1}^3L_j(t,x),\quad t>0,
\end{align*}
where $A_1=\{y\in\mathbb R^n:\;0<|y|<\frac{|x|}{2}\}$, $A_2=\{y\in\mathbb R^n:\;|y|>\frac{3|x|}{2}\}$ and $A_3=\{y\in\mathbb R^n:\;\frac{|x|}{2}\leq |y|\leq \frac{3|x|}{2}\}$.

Assume that $a\geq 0$. It follows that
\begin{equation}\label{A1}
L_1(t,x)\leq C\int_{\mathbb{R}^n}\mathcal{X}_{A_1}(y)\frac{e^{-c\frac{|x-y|^2}{t}}}{t^{n/2}}dy\leq C\frac{e^{-\frac{|x|^2}{t}}}{t^{n/2}},\quad t>0,
\end{equation}
and
\begin{align}\label{A2}
L_2(t,x)&\leq C\int_{\mathbb{R}^n}\mathcal{X}_{A_2}(y)\frac{e^{-c\frac{|x-y|^2}{t}}}{t^{n/2}}dy\leq C\int_{\mathbb{R}^n\setminus B(0,\frac{3|x|}{2})}\frac{e^{-c\frac{|y|^2}{t}}}{t^{n/2}}dy\nonumber\\
&\leq C\int_{\mathbb{R}^n\setminus B(0,\frac{3|x|}{2\sqrt{t}})}e^{-c|u|^2}du,\quad t>0.
\end{align}
On the other hand, by using Duhamel formula we obtain
\begin{align*}
    T_t^a(x,y)-W_t(x-y)&=-a\int_0^{t/2}\int_{\mathbb{R}^n}W_{t-s}(x-z)|z|^{-2}T_s^a(z,y)dzds\\
    &\quad -a\int_0^{t/2}\int_{\mathbb{R}^n}W_s(x-z)|z|^{-2}T_{t-s}^a(z,y)dzds,\quad x,y\in \mathbb{R}^n\mbox{ and }t>0.
\end{align*}
Since $\frac{|x-z|^2}{t-s}+\frac{|z-y|^2}{s}\geq c\frac{|x-y|^2}{t}$, $y,z\in \mathbb{R}^n$, $0<s<t$, by using \eqref{1.1}, \eqref{2.1} and \cite[(28)]{BADLL} we can deduce that
\begin{align*}
|T_t^a(x,y)-W_t(x-y)|&\leq C\frac{e^{-c\frac{|x-y|^2}{t}}}{t^{n/2}}\int_0^{t/2}\int_{\mathbb{R}^n}\Big(\frac{e^{-c(\frac{|x-z|^2}{s}+\frac{|z-y|^2}{s})}}{s^{n/2}}\Big)|z|^{-2}dzds\\
&\leq C\frac{e^{-c\frac{|x-y|^2}{t}}}{t^{n/2-1}}\Big(\frac{1}{|x|^2}+\frac{1}{|y|^2}\Big),\quad y\in \mathbb{R}^n\setminus\{0\}\mbox{ and }t>0.
\end{align*}
We get
\begin{align}\label{A3}
  L_3(t,x)&\leq C\int_{\mathbb{R}^n}\mathcal{X}_{A_3}(y)\frac{e^{-c\frac{|x-y|^2}{t}}}{|x|^2t^{{n/2}-1}}dy\leq C\int_{\mathbb{R}^n}\mathcal{X}_{A_3}(y)\frac{t^{3/4}}{|x-y|^{n-1/2}}dy\nonumber\\
  &\leq Ct^{3/4}\int_{B(0,\frac{5|x|}{2})}\frac{dy}{|x-y|^{n-1/2}} \leq Ct^{3/4}\int_0^{5|x|/2}\rho ^{-1/2}d\rho\leq Ct^{3/4},\quad t>0.
\end{align}

By combining \eqref{A1}, \eqref{A2} and \eqref{A3}, there exists $t_3>0$ such that $|H_2(t,x)|<\varepsilon$, $t\in (0,t_3)$.

When $-\frac{(n-2)^2}{4}<a<0$ by proceeding in a similar way we can see that there exists $t_4>0$ such that $|H_2(t,x)|<\varepsilon$, $t\in (0,t_4)$. 

By putting together the above estimations we conclude that
$$
\lim_{t\rightarrow 0^+}T_t^a(f)(x)=f(x).
$$

\end{proof} 
\bibliographystyle{acm}

\end{document}